\documentclass[11pt]{amsart}
\usepackage{amsfonts}
\usepackage{graphicx}
\usepackage{tabularx}
\usepackage{array}
\usepackage[usenames,dvipsnames]{color}
\usepackage{comment}
\usepackage{amsmath}
\usepackage{amsthm}
\usepackage{amssymb}
\usepackage{fullpage}
\usepackage{xcolor}
\usepackage{tikz}
\usepackage{listings}

\tikzstyle{legend_general}=[rectangle, rounded corners, thin,
       top color= white,bottom color=lavander!25,
       minimum width=2.5cm, minimum height=0.8cm,
       violet]

\renewcommand{\epsilon}{\varepsilon}

\DeclareMathOperator{\diam}{diam}
\DeclareMathOperator{\dist}{dist}
\DeclareMathOperator{\E}{E}

\newtheorem{theorem}{Theorem}[section]

\newtheorem{claim}[theorem]{Claim}
\newtheorem{lemma}[theorem]{Lemma}
\newtheorem{corollary}[theorem]{Corollary}
\theoremstyle{definition}
\newtheorem{definition}[theorem]{Definition}

\author{Peter Bradshaw}
\address{Department of Mathematics, Simon Fraser University, Burnaby, BC, Canada}
\email{pabradsh@sfu.ca}

\author{Bojan Mohar}
\address{Department of Mathematics, Simon Fraser University, Burnaby, BC, Canada}
\email{mohar@sfu.ca}

\title{A rainbow connectivity threshold for random graph families}

\begin{document}

\begin{abstract}
Given a family $\mathcal G$ of graphs on a common vertex set $X$, we say that $\mathcal G$ is \emph{rainbow connected} if for every vertex pair $u,v \in X$, there exists a path from $u$ to $v$ that uses at most one edge from each graph in $\mathcal G$. We consider the case that $\mathcal G$ contains $s$ graphs, each sampled randomly from $G(n,p)$, with $n = |X|$ and $p = \frac{c \log n}{sn}$, where $c > 1$ is a constant. We show that when $s$ is sufficiently large, $\mathcal G$ is a.a.s.~rainbow connected, and when $s$ is sufficiently small, $\mathcal G$ is a.a.s.~not rainbow connected. We also calculate a threshold of $s$ for the rainbow connectivity of $\mathcal G$, and we show that this threshold is concentrated on at most three values, which are larger than the diameter of the union of $\mathcal G$ by about $\frac{\log n}{(\log \log n)^2}$.

The same results also hold in a more traditional random rainbow setting, where we take a random graph $G\in G(n,p)$ with $p=\frac{c \log n}{n}$ ($c>1$) and color each edge of $G$ with a color chosen uniformly at random from the set $[s]$ of $s$ colors.
\end{abstract}

\maketitle

\section{Introduction}

In this paper, we consider random graphs using the \emph{Erd\H{o}s-R\'enyi model}, which are defined as follows. For a positive integer $n$, we consider a set $X$ of $n$ vertices. Then, for some value $0 \leq p \leq 1$, we construct a graph $G$ on $X$ by independently letting each edge $e \in \binom{X}{2}$ belong to $E(G)$ with probability $p$. We say that $G$ is a \emph{random graph} in $G(n,p)$. When a statement involving a value $n$ holds with probability approaching $1$ as $n$ approaches infinity, we say that the statement holds \emph{asymptotically almost surely}, or \emph{a.a.s.}~for short.

\subsection{Background}
One particular property of random graphs that has been the focus of extensive research is the diameter. Recall that the diameter $\diam(G)$ of a graph $G$ is defined as the maximum distance $\dist(u,v)$ taken over all vertex pairs $u,v$ in the graph.
Random graphs are often used as theoretical models for complex networks \cite{Network1} \cite{Network2}, 
in which case the diameter of a random graph represents the maximum degree of separation between any two nodes in a network. Therefore, the diameter of random graphs is often studied in order to gain a better understanding of the connections between elements of real systems.
In a seminal paper on random graphs from 1959, Erd\H{o}s and R\'enyi \cite{Erdos1959} showed that if a graph $G$ is randomly sampled from $G(n,p)$ with $p = \frac{c \log n}{n}$ and $c$ a constant, then $G$ is a.a.s.~connected when $c > 1$ and a.a.s.~disconnected when $c < 1$. This result of Erd\H{o}s and R\'enyi was essentially the first result on the diameter of random graphs, 
giving a probability threshold for when the diameter of a random graph is finite. Later, in 1974, Burtin \cite{Burtin} determined that when $p \gg n^{-\frac{d-1}{d}}$ for a positive integer $d$, a random graph sampled from $G(n,p)$ a.a.s.~has a diameter of at most $d$, and Klee and Larman \cite{Klee} rediscovered this result independently in 1981. Bollob\'as \cite{Bollobas84} then showed in 1984 that when
a graph $G$ on $n$ vertices has $\frac{c \log n}{n} \binom{n}{2}$ randomly placed edges (where $c > 1$ is a constant), the graph $G$ has a diameter that is a.a.s.~equal to one of at most four consecutive integer values. Chung and Lu \cite{Chung} later translated this result of Bollob\'as into the random setting $G(n,p)$, giving the following bounds:
\begin{theorem} \cite{Chung}
\label{thmChung}
Let $G$ be a random graph in $G(n,p)$, where $p = \frac{c \log n}{n}$ and $c > 1$. Then a.a.s.,
$$\frac{\log \left( \frac{c}{11} \right) + \log n}{\log c + \log \log n} \leq \diam(G) \leq \frac{\log \left( \frac{33c^2}{400} \right) + \log \log n + \log n}{\log c + \log \log n} + 2.$$
\end{theorem}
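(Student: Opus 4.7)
The plan is to prove the upper and lower bounds separately, both via the standard neighborhood-expansion (BFS) technique for diameter estimates of random graphs.

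\textbf{Upper bound.} For a vertex $v$, let $L_k(v)$ denote the $k$-th BFS layer and $B_k(v)=\bigcup_{i\le k}L_i(v)$. The goal is to show that BFS from every $v$ expands at rate roughly $np=c\log n$ until the explored ball has reached a size $T$ large enough that any two disjoint sets of size $T$ are a.a.s.\ joined by an edge. Concretely, I would reveal edges layer by layer: given $L_0(v),\ldots,L_k(v)$, each unreached vertex joins $L_{k+1}(v)$ independently with probability $1-(1-p)^{|L_k(v)|}$, so $|L_{k+1}(v)|$ is a sum of independent Bernoullis with mean $\approx np\,|L_k(v)|$ while $|B_k(v)|=o(n)$. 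A Chernoff bound then gives $|L_{k+1}(v)|\ge (1-\epsilon)\,np\,|L_k(v)|$ with failure probability $\exp(-\Omega(np\,|L_k(v)|))$. Iterating up to some $k_0\le \frac{1}{2}\cdot\frac{\log n+\log\log n+O(1)}{\log c+\log\log n}$ yields $|B_{k_0}(v)|\ge T$ with $T=\Theta(\sqrt{n/\log n})$ chosen so that $T^2p\ge 3\log n$. For any pair $u,v$, the disjoint portions of $B_{k_0}(u)$ and $B_{k_0}(v)$ each have at least $T$ vertices, and the probability that no edge joins them is at most $(1-p)^{T^2}=o(n^{-2})$; a union bound over all pairs yields $\dist(u,v)\le 2k_0+1$, matching the stated upper bound. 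The additional $+2$ absorbs possible overlap of the two BFS balls together with the last connecting edge, and the precise constant $33c^2/400$ arises from the optimal choice of $T$ and the Chernoff slack.

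\textbf{Lower bound.} This is essentially a first-moment calculation on short paths. For any fixed pair $u,v$, the expected number of $u$-$v$ paths of length exactly $j$ is at most $n^{j-1}p^j=(np)^j/n$, so the expected number of $u$-$v$ paths of length at most $d-1$ is at most $2(np)^{d-1}/n$. Choose $d$ as the largest integer with $(np)^{d-1}\le n/11$; the stated bound $\frac{\log(c/11)+\log n}{\log c+\log\log n}$ is exactly the solution of this inequality. Markov's inequality then gives $\Pr[\dist(u,v)\le d-1]\le 2/11$, so the expected number of ordered pairs at distance $\ge d$ is $\Omega(n^2)$. A mild second-moment argument (using that distinct short paths in $G(n,p)$ typically share very few edges, so the events ``$\dist(u,v)\ge d$'' are only weakly correlated) then upgrades this expectation to the statement that a.a.s.\ at least one such pair exists, yielding $\diam(G)\ge d$.

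\textbf{Main obstacle.} The delicate point is the BFS concentration used for the upper bound: the per-vertex failure probability must be $o(1/n)$ so that the union bound over the $n$ starting vertices goes through. This forces the Chernoff deviation to be at the scale of $np\,|L_k(v)|$ itself, not merely at the easier square-root scale. I would handle this by induction on $k$, conditioning at each stage on the sequence of earlier layers so that the next-layer counts remain sums of independent indicators to which the sharpest Chernoff tails apply. Tracking lower-order error terms through this induction, together with the careful choice of $T$ above, is what ultimately fixes the precise constants $33c^2/400$ in the upper bound and $c/11$ in the lower bound.
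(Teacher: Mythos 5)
The paper does not prove Theorem \ref{thmChung}: it is quoted directly from Chung and Lu \cite{Chung}, so there is no in-paper argument to compare against. Judged on its own terms, your sketch follows the right general framework for Erd\H{o}s--R\'enyi diameter bounds (BFS expansion controlled by Chernoff for the upper bound, a first-moment calculation pinning down the threshold $d$ for the lower bound), but two points do not hold up.

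In the upper-bound argument, the choice $T=\Theta\bigl(\sqrt{n/\log n}\bigr)$ does not satisfy the requirement you impose on it. With $p=c\log n/n$ one computes $T^2p=c$, a constant, whereas you need $T^2p\ge 3\log n$ so that $(1-p)^{T^2}=o(n^{-2})$ survives the union bound over pairs. That forces $T=\Theta(\sqrt n)$, e.g.\ $T\ge\sqrt{3n/c}$. The error is easily repaired and only shifts $k_0$ by $O(1)$, but as written the inequality on which the whole two-sided BFS argument rests is off by a $\sqrt{\log n}$ factor.

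In the lower-bound argument, the passage from $\E\bigl[\#\{\text{pairs at distance}\ge d\}\bigr]=\Omega(n^2)$ to a.a.s.\ existence of such a pair is presented as a mild second-moment bound, justified by the heuristic that distinct short paths rarely share edges. That heuristic controls correlations between \emph{existence} events for individual paths; the events you actually need, $\{\dist(u,v)\ge d\}$, are their complements, all decreasing in the edge set and hence positively correlated, and converting the path-disjointness heuristic into a usable covariance estimate for these non-existence events is not immediate. The standard and cleaner route is to fix a single vertex $v$ and apply Chernoff upper bounds layer by layer to show that $|B_{d-1}(v)|<n$ a.a.s.\ once $(np)^{d-1}$ is a small constant fraction of $n$; that already produces a vertex at distance $\ge d$ from $v$ and hence $\diam(G)\ge d$, with no second moment over pairs at all. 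Your first-moment computation does identify the correct value of $d$ (in fact a slightly better one than quoted, since your $d$ carries an extra $+1$); what is missing is a defensible replacement for the unstated concentration step.
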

In other words, Chung and Lu show that the diameter of $G$ is a.a.s.~one of at most four consecutive integer values, each within a constant from $\frac{\log n}{\log c + \log \log n}$.

In seeking the diameter of a random graph $G$, one essentially asks the following question: For which values of $s$ does there a.a.s.~exist a path of length at most $s$ between every pair of vertices in $G$? 
In this paper, we ask a similar question in the following \emph{rainbow} setting.
 
We consider a family $\mathcal G = \{G_1, \dots, G_s\}$ of $s$ graphs on a common vertex set $X$ of size $n$. We say that a path $P \subseteq \bigcup_{i = 1}^s E(G_i)$ is a \emph{rainbow path} if there exists an injection $\phi:E(P) \rightarrow [s]$ such that for each edge $e \in E(P)$, $e \in E(G_{\phi(e)})$. 
For two vertices $u,v \in X$, we say that $u$ and $v$ are \emph{rainbow connected} if there exists a rainbow path with $u$ and $v$ as endpoints. Furthermore, we say that $\mathcal G$ is \emph{rainbow connected} if every pair $(u,v) \in \binom{X}{2}$ is rainbow connected. 
If we let each graph $G_i \in \mathcal G$ have its edges colored with the color $i$, then we may equivalently define a rainbow path as a path that uses at most one edge of each color. For a given family of random graphs, we will ask, for which values of $s$ does there a.a.s.~exist a rainbow path of length at most $s$ between every pair of vertices in $X$? Equivalently, we ask, for which values of $s$ is $\mathcal G$ a.a.s.~rainbow connected? 

The notion of rainbow connectivity was first introduced by Chartrand et al.~\cite{Chartrand} as a theoretical tool for studying communication in secure networks. They defined the \emph{rainbow connection number} of a graph $G$ as the minimum number of colors needed in order to give $G$ an edge-coloring with which $G$ is rainbow connected. 
The rainbow connection number is well-understood for certain graph classes including trees, cycles \cite{Chartrand}, and Cayley graphs on abelian groups \cite{LiRainbow}. For random graphs $G$ sampled from $G(n,p)$ with $p = \frac{\log n + \omega}{n}$ and $\omega = o(\log n)$ an unbounded increasing function, Frieze and Tsourakakis \cite{Frieze} showed that the rainbow connection number of $G$ asymptotically approaches the diameter of $G$.

Rainbow paths are an example of rainbow graphs, which are defined as edge-colored graphs in which each edge has a unique color. Depending on the setting, rainbow graphs are also often referred to either as transversals or partial transversals of a graph family. Recently, rainbow graph structures have received increasing attention, and several classical results have been extended into the rainbow setting. For instance, a famous theorem of Dirac \cite{Dirac} states that a graph on $n$ vertices with minimum degree at least $n/2$ must contain a Hamiltonian cycle. Joos and Kim \cite{Joos} have generalized Dirac's result to show that given a family $\mathcal G = \{G_1, \dots, G_n\}$ of $n$ graphs on a common set $X$ of $n$ vertices, each of minimum degree at least $n/2$, where the edges of each graph $G_i$ are monochromatically colored with the color $i$, there must exist a Hamiltonian cycle on $X$ using exactly one edge of each color. In the same flavor, a classic result of Moon and Moser \cite{Moon} gives a minimum degree condition for the existence of a Hamiltonian cycle in a bipartite graph, and one of the authors of this paper has recently shown a similar generalization of this result into the rainbow setting \cite{PBbipartite}. In addition to rainbow Hamiltonian cycles, certain other rainbow structures have been shown to exist under appropriate conditions. For instance, Aharoni et al.~\cite{Aharoni} obtained a rainbow version of Mantel's theorem, proving that given a family $\mathcal G$ of three graphs on a common set of $n$ vertices, if each graph in $\mathcal G$ contains at least $0.2557n^2$ edges, then there exists a rainbow triangle---that is, a triangle that uses exactly one edge from each graph of $\mathcal G$.

Above, we introduced rainbow connectivity in a graph family $\mathcal G$ that is the union of many individual random graphs on a common vertex set, each with monochromatically colored edges of a distinct color. There is also a more traditional setting for discussing randomly edge-colored graphs, in which a single graph $G$ is constructed using some random process, and then each edge of $G$ is randomly given a color. For example, Frieze and McKay \cite{FriezeMcKay} show that when $G$ is constructed by randomly adding edges one at a time and giving each new edge one of $n-1$ colors uniformly at random, the time at which $G$ first contains edges of every color almost surely coincides with the time at which $G$ first contains a rainbow spanning tree. Additionally, Ferber and Krivelevich \cite{Ferber} show that for a graph $G$ randomly sampled from $G(n,p)$ with $p = \frac{\log n + \log \log n + \omega}{n}$, with $\omega$ an unbounded increasing function, if each edge of $G$ randomly receives one of $(1+o(1))n$ colors, then $G$ a.a.s.~contains a rainbow Hamiltonian cycle.

\subsection{Our results}
First, we fix some notation that we will use throughout the paper. We let $X$ be a set of $n$ vertices, where $n$ is a large integer. We pick an integer $s \geq 1$, depending on $n$, and we let $c > 1$ be a fixed constant. We let 
$$p = \frac{c \log n}{sn}.$$
Then, for $1 \leq i \leq s$, we take a random graph $G_i \in G(n,p)$, and we let $\mathcal G= \{G_1, \dots, G_s \}$. We often refer to the values $1, \dots, s$ as \emph{colors}, and we will 
imagine
that each graph $G_i$ has its edges colored with color $i$.
In this setting, it is straightforward to show that an edge $e \in \binom{X}{2}$ belongs to at least one graph $G_i \in \mathcal G$ with probability $(c - o(1))\frac{\log n}{n}$, and therefore, by the threshold of Erd\H{o}s and R\'enyi \cite{Erdos1959}, $\bigcup_{i = 1}^s G_i$ is a.a.s.~connected.

In the following two main results, we determine a threshold for the number $s$ of graphs required in $\mathcal G$ to ensure rainbow connectivity.

\begin{theorem}
\label{thmUB}
Let $\mathcal G = \{G_1, \dots, G_s\}$ be a family of $s$ graphs on a common set of $n$ vertices, each taken randomly from $G(n,p)$, with $p = \frac{c \log n}{sn}$, where $c > 1$ is a constant.
If $$s \leq \frac{\log n}{\log c - 1 + \log \log n} - \frac{1}{2} + \frac{\log \log \log n}{3 \log \log n},$$
then a.a.s.~$\mathcal G$ is not rainbow connected. 
\end{theorem}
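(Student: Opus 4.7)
The plan is to fix a single vertex $v^\ast \in X$ and show that with probability $1-o(1)$ the set
\[
B(v^\ast) \;=\; \{\,u \in X : u \text{ is rainbow connected to } v^\ast \text{ in } \mathcal G\,\}
\]
is a proper subset of $X$. Since every rainbow path has at most $s$ edges (it uses at most $s$ distinct colors), $B(v^\ast)$ consists of the endpoints of rainbow paths of length at most $s$ starting at $v^\ast$. If I can establish that $\E[\,|B(v^\ast)|\,]=o(n)$, then Markov's inequality gives $\Pr[B(v^\ast)=X]\le \E[\,|B(v^\ast)|\,]/n=o(1)$, and this immediately implies that $\mathcal G$ is a.a.s.\ not rainbow connected.

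To estimate $\E[\,|B(v^\ast)|\,]$ I would use a union bound over the rainbow paths starting at $v^\ast$. For a fixed path $P$ of length $k$, the event that $P$ is rainbow is the union, over the $\frac{s!}{(s-k)!}$ injections $\phi:E(P)\to[s]$, of the events $\{e\in G_{\phi(e)} \text{ for every } e\in E(P)\}$. Each such event has probability $p^k$ by independence across edges and colors, so $\Pr[P \text{ is rainbow}]\le \frac{s!}{(s-k)!}p^k$. Since at most $n^k$ length-$k$ paths start at $v^\ast$, we obtain
\[
\E[\,|B(v^\ast)|\,] \;\le\; \sum_{k=1}^{s} n^k\cdot\frac{s!}{(s-k)!}\,p^k \;=\; \sum_{k=1}^{s}\frac{s!}{(s-k)!}\left(\frac{c\log n}{s}\right)^{\!k}.
\]
The ratio of the $k$-th summand to the $(k{-}1)$-st is $(s-k+1)\,c\log n/s$, which greatly exceeds $1$ for every $k\le s$ (because $s\ll c\log n$), so the sum is dominated by its $k=s$ term. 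Stirling's approximation then gives
\[
\E[\,|B(v^\ast)|\,] \;\le\; (1+o(1))\,s!\left(\frac{c\log n}{s}\right)^{\!s} \;\le\; (1+o(1))\sqrt{2\pi s}\left(\frac{c\log n}{e}\right)^{\!s}.
\]

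The final task is to verify that the hypothesis on $s$ in the theorem forces the right-hand side to be $o(n)$. Taking logarithms, I need
\[
s(\log c + \log\log n - 1) + \tfrac{1}{2}\log(2\pi s) \;\le\; \log n - g(n)
\]
for some $g(n)\to\infty$. Using $s=(1+o(1))\log n/\log\log n$, one has $\log s=\log\log n - \log\log\log n + O(1)$, so
\[
\frac{\tfrac{1}{2}\log(2\pi s)}{\log c + \log\log n - 1} \;=\; \tfrac{1}{2} - \frac{\log\log\log n}{2\log\log n} + o\!\left(\frac{\log\log\log n}{\log\log n}\right).
\]
A short computation then shows that choosing, for instance, $g(n)=\tfrac{1}{6}\log\log\log n$ reduces the required inequality on $s$ to exactly the bound stated in the theorem. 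I expect this final asymptotic bookkeeping to be the main obstacle: the unusual constant $\tfrac{1}{3}$ in the correction $\tfrac{\log\log\log n}{3\log\log n}$ arises from balancing the Stirling contribution $\tfrac{1}{2}\log s$ against the slack needed for Markov's inequality to deliver an $o(1)$ failure probability, and some care is required to ensure that the lower-order error terms remain harmless throughout.
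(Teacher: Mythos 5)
Your proposal is correct and is essentially the paper's own argument: both are first-moment computations over rainbow paths from a fixed source, bounding the path count by $n^k$ times the number of color injections times $p^k$, identifying the $k=s$ term as dominant, and applying Stirling so that the $\tfrac12\log s$ term balances against the $\tfrac{\log\log\log n}{3\log\log n}$ correction. The only (cosmetic) difference is that the paper fixes both endpoints and shows the expected number of rainbow $u$--$v$ paths is $o(1)$, whereas you fix one endpoint and show the expected number of rainbow-reachable vertices is $o(n)$ --- the same calculation shifted by a factor of $n$.
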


\begin{theorem}
\label{thmEasyLB}
Let $\mathcal G = \{G_1, \dots, G_s\}$ be a family of $s$ graphs on a common set of $n$ vertices, each taken randomly from $G(n,p)$, with $p = \frac{c \log n}{sn}$, where $c > 1$ is a constant.
If 
$$s \geq \frac{\log n}{\log c - 1 + \log \log n} + \frac{3}{2} + \frac{2 \sqrt{ \log \log \log n }}{ {\log \log n}} ,$$
then a.a.s.~$\mathcal G$ is rainbow connected. 
\end{theorem}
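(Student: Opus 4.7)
My plan is to combine a rainbow breadth-first search from each endpoint with a sprinkling argument. Fixing a pair $u, v \in X$, I aim to bound by $n^{-3}$ the probability that $\mathcal{G}$ contains no rainbow $u$-$v$ path of length at most $s$; a union bound over the $\binom{n}{2}$ pairs then yields the conclusion.

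The structural idea is to partition $[s] = A \sqcup B \sqcup C$ and run a rainbow BFS from $u$ using only colors in $A$ (producing a reachable set $S_u$) and from $v$ using only colors in $B$ (producing $S_v$). Because $A,B,C$ are disjoint and edge presence is independent across color classes, $S_u$ and $S_v$ are independent random sets, and any edge $xy$ with $x\in S_u$, $y\in S_v$, colored from $C$, or any $w \in S_u \cap S_v$, yields a rainbow $u$-$v$ path of length at most $|A|+1+|B|\leq s$. The parameters $|A|,|B|,|C|$ are to be optimized so as to maximize the expected number of such connecting structures.

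The proof then has three technical steps. First, I would estimate $\E|S_u|$ by iterating the per-level growth $\E|L_\ell^A| \approx |L_{\ell-1}^A|(|A|-\ell+1)(c\log n)/s$, obtaining $\E|S_u| \approx |A|!(c\log n)^{|A|}/s^{|A|}$, and similarly for $\E|S_v|$. Second, I would establish concentration of $|S_u|$ and $|S_v|$ about their expectations using either a vertex-exposure Doob martingale together with Azuma's inequality, or a conditional layer-by-layer Chernoff bound. Third, I would apply Janson's inequality (or a direct second-moment argument) to upgrade a constant-probability lower bound on the existence of a connecting structure, either an $S_u \cap S_v$ overlap or a sprinkling edge in colors of $C$, to the required $1 - n^{-3}$ guarantee.

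The main obstacle is obtaining the precise constant $\frac{3}{2}$. A careful Stirling expansion of $|A|!(c\log n)^{|A|}/s^{|A|}$ produces a factor of $\sqrt{|A|}/e^{|A|}$, which, when optimized jointly over the partition constraint $|A| + |B| + |C| = s$ using a log-convexity inequality such as $|A|!|B|! \leq s!/\binom{s}{|A|}$, yields an expected number of connecting structures of order $\sqrt{s}(c\log n/e)^{s - k_0}/n^{O(1)}$. Matching this against the $3\log n$ threshold that Chernoff (or Janson) requires to beat $n^{-3}$ produces the additive $\frac{3}{2}$ in the bound (essentially $\frac{1}{2}$ from the Stirling square-root correction and $1$ from the sprinkling slack). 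The tertiary correction $\frac{2\sqrt{\log\log\log n}}{\log\log n}$ comes from the fine-tuning needed to make the Chernoff (or Janson) tail dominate the $\binom{n}{2}$ union bound and the small error terms in the Stirling expansion.
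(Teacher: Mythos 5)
Your high-level plan — a bidirectional rainbow BFS from $u$ and $v$ followed by a sprinkling edge — is the right shape of argument, and your suggestions for concentration (level-by-level Chernoff) match what the paper actually does. However, the fixed disjoint partition $[s] = A\sqcup B\sqcup C$ contains a genuine gap, and it is precisely the place where the paper works harder.

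Here is the problem. If you run a rainbow BFS from $u$ restricted to colors in $A$, then at level $\ell$ only $|A|-\ell+1$ colors remain available, so the expected sphere size grows like $\prod_{\ell=1}^{|A|}(|A|-\ell+1)\cdot pn = |A|!\,(pn)^{|A|}$. Since $pn = c\log n/s \approx c\log\log n$, this is $|A|!\,(c\log\log n)^{|A|}$, \emph{not} $(c\log n)^{|A|}$. For the sprinkling step you need the number of $S_u$--$S_v$ pairs, $|S_u|\cdot|S_v|\cdot|C|p$, to exceed $\Omega(\log n)$; since $|S_v| \leq n$ and $np = c\log n/s$, this forces $|S_u| = \Omega(s) = \Omega(\log n/\log\log n)$. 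Moreover, for the Chernoff tail to beat the $\binom{n}{2}$ union bound, you need $\Pr(|S_u| \text{ too small}) = o(n^{-2})$, which again forces the expected value $|A|!\,(c\log\log n)^{|A|} \geq \Omega(\log n)$. Either way you need $|A| \gtrsim \log\log n/\log\log\log n$. But this is expensive: with $|A|=a$ and $|B|=b$ one has $a!\,b! = (a+b)!/\binom{a+b}{a}$, and the loss $\log\binom{a+b}{a} \approx a\log s$ translates (after dividing by $\log\log n$) into an additive cost of roughly $a$ in the threshold for $s$. With $a \gtrsim \log\log n/\log\log\log n$, your bound would read $s \geq \frac{\log n}{\log c - 1 + \log\log n} + \Omega\bigl(\tfrac{\log\log n}{\log\log\log n}\bigr)$ rather than $+\frac{3}{2}+o(1)$. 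In short, a \emph{fixed} color set for the $u$-side is simply too wasteful; the additive constant $\frac{3}{2}$ is out of reach by this route.

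The paper avoids this by \emph{not} fixing the $u$-side palette. It grows a tiny ball of radius $d = \lceil\log\log\log\log n\rceil$ around $u$ using \emph{all} of the colors $[s-1]$; since every step still has $\approx s$ colors available, the ball contains $\Theta\bigl((c\log n)^d\bigr)$ endpoints (Lemma~\ref{lemmaLiftOff}). Each such endpoint $w$ is reached by a path $P$ using at most $d$ colors, and \emph{which} $d$ colors vary with $P$. The BFS from $v$ is then run, separately for each $P$, using the path-specific remainder $R_P = [s-1]\setminus C(P)$, which has size $\geq s-1-d$ (Lemma~\ref{lemmaVSpheres}); a union bound over the (polylogarithmically many) short paths $P$ absorbs the resulting dependence at a cost of $o(1/n^3)$. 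Color $s$ is reserved for the sprinkle. The crucial accounting point is that the $u$-side ball yields a factor $(c\log n)^d$ while ``spending'' only $d$ colors, which costs essentially nothing: $d$ appears only in the negligible $O(d/\log\log n)$ error term. In your framework, the same $d$ colors placed in a fixed $A$ yield only $d!\,(c\log\log n)^d$ endpoints — smaller by a factor of roughly $s^d/d!$ — and this is exactly the gap. So while the bidirectional-plus-sprinkle skeleton, the Stirling $\tfrac12$, and the one extra color for sprinkling are all on the right track, the partition must be made path-dependent on the short side, not fixed in advance, to reach the stated constant.
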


The minimum value $s$ that guarantees that the $s$ graphs in $\mathcal G$ a.a.s.~make a rainbow connected family is called the \emph{rainbow connectivity threshold}.
Together, Theorems \ref{thmUB} and \ref{thmEasyLB} show that this threshold is concentrated on at most three consecutive integer values 
in the vicinity of $ \frac{\log n}{\log c - 1 + \log \log n}$: two integer values that may fit between the two bounds, along with the smallest integer greater than or equal to the bound of Theorem \ref{thmEasyLB}. 

\begin{corollary}
The rainbow connectivity threshold is a.a.s.~equal to one of the three values, $s_0,\allowbreak s_0+1,s_0+2$, where $s_0 = \bigl\lfloor \frac{\log n}{\log c - 1 + \log \log n} + \frac{1}{2} + \frac{\log \log \log n}{3 \log \log n} \bigr\rfloor$.
\end{corollary}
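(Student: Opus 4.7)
The plan is to derive the corollary as a direct bookkeeping consequence of Theorems \ref{thmUB} and \ref{thmEasyLB}. Write
$$L = \frac{\log n}{\log c - 1 + \log \log n} - \frac{1}{2} + \frac{\log \log \log n}{3 \log \log n}, \qquad U = \frac{\log n}{\log c - 1 + \log \log n} + \frac{3}{2} + \frac{2\sqrt{\log \log \log n}}{\log \log n}$$
for the two quantitative bounds supplied by those theorems, so that $s_0 = \lfloor L \rfloor + 1$.

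For the lower bound on the threshold, note that every integer $s$ with $s \leq s_0 - 1 = \lfloor L \rfloor$ satisfies $s \leq L$, so Theorem \ref{thmUB} yields that $\mathcal{G}$ is a.a.s.~not rainbow connected at every such $s$. Hence the rainbow connectivity threshold is at least $s_0$.

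For the upper bound, I would verify that $s_0 + 2 \geq U$ for all sufficiently large $n$ and then apply Theorem \ref{thmEasyLB} at $s = s_0 + 2$ to conclude that $\mathcal{G}$ is a.a.s.~rainbow connected there. A direct computation gives
$$U - L = 2 + \frac{2\sqrt{\log \log \log n} - \tfrac13 \log \log \log n}{\log \log n}.$$
Setting $x = \log \log \log n$, the numerator $2\sqrt{x} - x/3$ is negative once $x > 36$, so $U - L < 2$ for $n$ large enough. It then follows that $U < L + 2 \leq \lfloor L \rfloor + 3 = s_0 + 2$, as required. Combining both bounds places the threshold in $\{s_0, s_0 + 1, s_0 + 2\}$ for sufficiently large $n$. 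The argument presents essentially no real obstacle; the only substantive check is this asymptotic comparison of $U - L$ with $2$.
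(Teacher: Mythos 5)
Your proposal is correct and follows essentially the same route the paper takes (the paper gives no separate proof, only the remark that the two bounds of Theorems \ref{thmUB} and \ref{thmEasyLB} are eventually less than $2$ apart, so at most two integers fit strictly between them plus the smallest integer at or above the upper bound). Your explicit check that $2\sqrt{x}-x/3<0$ for $x=\log\log\log n$ large, hence $U<L+2\le s_0+2$, is exactly the bookkeeping needed.
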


It is simple to observe that in order for $\mathcal G$ to be rainbow connected, $s$ must be at least as large as the diameter of $\bigcup_{i = 1}^s G_i$. However, by comparing the threshold found in Theorems \ref{thmUB} and \ref{thmEasyLB} with the result of Theorem \ref{thmChung}, we see in fact that letting $s$ equal the diameter of $\bigcup_{i = 1}^s G_i$ is not enough to ensure a.a.s. the rainbow connectivity of $\mathcal G$.
Indeed, according to Theorem \ref{thmChung}, the diameter of $\bigcup_{i = 1}^s G_i$ is a.a.s.\ close to $\frac{\log n}{\log c + \log \log n}$, which is slightly smaller (by about $\frac{\log n}{(\log \log n)^2}$) than our rainbow connectivity threshold.

Finally, in our concluding section we discuss rainbow connectivity in the more traditional random rainbow setting, where we take a random graph $G\in G(n,p)$ with $p=\frac{c \log n}{n}$ ($c>1$) and color each edge of $G$ with a color chosen uniformly at random from the set $[s]$ of $s$ colors. In Section \ref{sec:differentSetting}, we show that results similar to Theorems \ref{thmUB} and \ref{thmEasyLB} also hold in this alternative type of random setting. See Theorems \ref{thm:newUB} and \ref{thm:newLB}.

\section{Proof of Theorem \ref{thmUB}}

We start by proving the simpler one of the two main results.

\begin{proof}[Proof of Theorem \ref{thmUB}]
We show that for an arbitrary vertex pair $u,v \in X$ ($u\ne v$), there a.a.s.~exists no rainbow path from $u$ to $v$. We will use the First Moment Method (cf. \cite[Chapter 3]{MolloyReed}).

Let $u,v \in X$ be a vertex pair, and let $1 \leq t \leq s$. The total number of possible rainbow colored paths of length $t$ from $u$ to $v$ is less than $n^{t-1} s!$, and the probability of any such rainbow path's existence in $\mathcal G$ is equal to $p^t$. Therefore, the expected number of rainbow paths of length $t$ from $u$ to $v$ is at most $n^{t-1} s! \,p^t$. Now, using Stirling's approximation, we may estimate that the expected number of rainbow paths from $u$ to $v$ of length $t$ is at most 
\begin{eqnarray*}
\frac{(pn)^t}{n}\, s! &<& \frac{(pn)^t}{n}\cdot s^{s + \frac{1}{2}} e^{-s+1} \\
 & = & \frac{1}{n} \left( \frac{psn}{e} \right)^t s^{s-t+\frac{1}{2}} e^{t-s+1} \\
 & = & \frac{1}{n} \left( \frac{c \log n}{e} \right)^t \left(\frac{s}{e}\right)^{s-t+\frac{1}{2}} e^{3/2}.
\end{eqnarray*}
Since 
$$s \leq \frac{\log n}{\log c - 1 + \log\log n} - \frac{1}{2} + \frac{\log\log\log n}{3\log\log n},$$
we can write $s = \frac{\log n}{\log c - 1 + \log\log n} +k$, where 
$$k \le - \frac{1}{2} + \frac{\frac{1}{2} \log\log\log n - \sqrt{\log\log\log n} }{\log c - 1 + \log\log n} .$$ 
Note that $n = \left(\frac{c \log n}{e} \right)^{s-k}$.
Then, letting $Y$ be the number of rainbow paths from $u$ to $v$ of any length $t\ge1$, we have
\begin{eqnarray*}
\E[Y] < \sum_{t = 1}^s \left( \frac{c \log n}{e} \right)^{t-s +k } \left( \frac{s}{e} \right)^{s-t+\frac{1}{2}} e^{3/2} & = & \left( \frac{c \log n}{e} \right)^k e \sqrt{s} \cdot \sum_{t = 1}^s \left( \frac{s}{c \log n} \right)^{s-t} .
\end{eqnarray*}
Since $s < \frac{2 \log n}{\log \log n}$, we have
\begin{eqnarray*}
 \E[Y] & < & \left( \frac{c \log n}{e} \right)^k e\, \sqrt{\frac{2\log n}{\log \log n}} \, \sum_{t = 1}^s \left( \frac{2}{c \log \log n} \right)^{s-t} .
 \end{eqnarray*}
Since the sum is less than $2$, and since $\sqrt{2} e < 4$, we have 
 \begin{eqnarray*}
 \E[Y] & < & 8 \left( \frac{c \log n}{e} \right)^k \sqrt{\frac{\log n}{\log \log n}} .
 \end{eqnarray*}
Finally, since $k+\frac{1}{2}$ is at most the logarithm of 
$\sqrt{\log\log n} \exp \bigl(-\sqrt{\log \log \log n}\,\bigr)$ with a base of $c \log n / e$, it follows that 
\begin{eqnarray*}
 \E[Y] & < & 8
 e^{1/2} c^{-1/2} \exp \left( {- \sqrt { \log \log \log n } } \right) \rightarrow 0. 
\end{eqnarray*}
By Markov's inequality, the probability that there exists a rainbow path from $u$ to $v$ is at most $\E[Y]$, and thus it holds that for an arbitrarily chosen vertex pair $u,v \in X$, $u$ and $v$ are a.a.s.~not rainbow connected.
\end{proof}

\section{Tools and key ideas}

In this section, we outline some tools and key ideas that we will need for the proof of Theorem \ref{thmEasyLB}. For this entire section and the next, we set 
\begin{equation}
  d = \lceil \log \log \log \log n \rceil. \label{eq:def_d}
\end{equation}

We will need several inequalities that will help us estimate various probabilities. First, we have the following inequality, which follows easily from the inequality $(1-p)^x \le \exp(-px)$. We will use it throughout the entire paper without explicitly stating that we are doing so.
\begin{lemma}
If\/ $p,x > 0$ and $px < 1$, then $1 - (1-p)^x > px - p^2 x^2.$
\end{lemma}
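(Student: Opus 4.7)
The plan is to reduce the two-variable inequality to a one-variable one by setting $y = px$ and using the suggested starting point $(1-p)^x \le \exp(-px) = \exp(-y)$. This reduction makes it sufficient to verify the scalar inequality $\exp(-y) < 1 - y + y^2$ for all $y > 0$; chaining it with $1 - (1-p)^x \ge 1 - \exp(-y)$ then yields the lemma. The hypothesis $px < 1$ is used only to guarantee that the right-hand side $px - p^2 x^2 = px(1-px)$ is actually positive, so that the bound is useful in applications; the inequality itself would hold trivially in the regime $px \ge 1$.

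For the scalar inequality I would use elementary calculus. Define $g(y) := 1 - y + y^2 - \exp(-y)$ and show that $g > 0$ on $(0,\infty)$. One checks $g(0) = 0$ and $g'(0) = -1 + 0 + 1 = 0$, while
\[
g''(y) = 2 - \exp(-y) > 0 \quad \text{for all } y \ge 0.
\]
Hence $g'$ is strictly increasing on $[0,\infty)$ and therefore strictly positive on $(0,\infty)$, which in turn forces $g$ itself to be strictly increasing and therefore strictly positive on $(0,\infty)$. Substituting back $y = px$ gives the desired conclusion.

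There is essentially no real obstacle here. The only point worth flagging is that $x$ is allowed to be an arbitrary positive real rather than an integer, which rules out the slickest binomial-expansion/alternating-series proof (one would otherwise observe $1 - (1-p)^x = xp - \binom{x}{2}p^2 + \cdots$ and truncate); the exponential detour above sidesteps this issue cleanly and works uniformly for all real $x > 0$.
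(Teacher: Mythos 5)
Your argument is correct and follows exactly the reduction the paper indicates: starting from $(1-p)^x \le \exp(-px)$ and then verifying the one-variable inequality $\exp(-y) < 1 - y + y^2$ for $y = px > 0$ via a second-derivative argument, which is precisely what ``follows easily'' was meant to cover. Your side remark that the hypothesis $px<1$ merely ensures the bound $px-p^2x^2$ is nontrivial, rather than being needed for the inequality to hold, is also accurate.
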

Next,
we will use the following forms of the Chernoff bound, which can be found, for example, in Chapter 4 of \cite{Mitzenmacher}. 

\begin{theorem}
\label{thmChernoff}
Let $Y$ be a random variable that is the sum of pairwise independent indicator variables---that is, random variables taking values in $\{0,1\}$. Let $\mu = \E[Y]$. Then for any value $\delta \in (0,1)$,
$$\Pr(Y < (1 - \delta) \mu) \leq \left( \frac{e^{-\delta}}{(1-\delta)^{1-\delta}} \right)^{\mu} ,$$
and for any value $\delta > 0$,
$$ \Pr(Y > (1 + \delta) \mu) \leq \left( \frac{e^{\delta}}{(1+\delta)^{1+\delta}} \right)^{\mu}.$$
\end{theorem}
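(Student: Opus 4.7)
The plan is to establish both tail bounds by the standard Chernoff--Bernstein exponential moment method. Write $Y = \sum_i X_i$, where each $X_i$ is an indicator with $\Pr(X_i = 1) = p_i$, so that $\mu = \sum_i p_i$.

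For the upper tail, I would fix a parameter $t > 0$ and apply Markov's inequality to the nonnegative random variable $e^{tY}$, yielding
$$\Pr\bigl(Y > (1+\delta)\mu\bigr) \;\le\; e^{-t(1+\delta)\mu}\, \E[e^{tY}].$$
The key step is to control the moment generating function $\E[e^{tY}]$. Factoring it across the $X_i$'s and using the pointwise estimate $\E[e^{tX_i}] = 1 + p_i(e^t - 1) \le \exp(p_i(e^t-1))$ gives $\E[e^{tY}] \le \exp((e^t-1)\mu)$. Substituting reduces the question to minimizing $(e^t-1) - t(1+\delta)$ over $t > 0$; the optimum sits at $t = \ln(1+\delta) > 0$, and plugging back in collapses the bound to $\bigl(e^\delta/(1+\delta)^{1+\delta}\bigr)^\mu$, as claimed.

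For the lower tail I would run the symmetric argument with $t < 0$, rewriting $\{Y < (1-\delta)\mu\}$ as $\{e^{tY} > e^{t(1-\delta)\mu}\}$ (the direction of the inequality inside the exponential flips because $t < 0$) and applying Markov as before. The same MGF bound gives $\Pr\bigl(Y < (1-\delta)\mu\bigr) \le \exp((e^t-1)\mu - t(1-\delta)\mu)$, and the optimum at $t = \ln(1-\delta) < 0$ yields $\bigl(e^{-\delta}/(1-\delta)^{1-\delta}\bigr)^\mu$.

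The main obstacle, and really the only non-routine point, is the moment generating function factorization $\E[e^{tY}] = \prod_i \E[e^{tX_i}]$, which is the single place in the argument where the independence hypothesis is actually invoked. It is worth flagging that this factorization requires full joint independence of the $X_i$'s and is \emph{not} a consequence of literal pairwise independence alone: pairwise independence controls only the second moment and supports merely a Chebyshev-type bound with polynomial decay, not the exponential bounds stated here. The exponential tails in the theorem should therefore be understood in the standard sense of jointly independent indicators, which is the hypothesis used in the cited source \cite{Mitzenmacher} that the bounds are drawn from.
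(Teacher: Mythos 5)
The paper does not prove this theorem; it is quoted from Chapter 4 of \cite{Mitzenmacher}, and your argument is exactly the standard exponential-moment proof given there, carried out correctly (both optimizations $t=\ln(1+\delta)$ and $t=\ln(1-\delta)$ check out). Your closing caveat is a legitimate catch rather than a defect of your proof: the factorization $\E[e^{tY}]=\prod_i\E[e^{tX_i}]$ genuinely requires mutual independence, and literal pairwise independence would only support Chebyshev-type polynomial tails, so the word ``pairwise'' in the theorem statement is an imprecision in the paper. This is harmless downstream, since every application in the paper (degrees and sphere sizes in $G(n,p)$) sums indicators of mutually independent edge events.
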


\begin{theorem}
\label{thmSimpleChernoff}
Let $Y$ be a random variable that is the sum of pairwise independent indicator variables, and let $\mu = \E[Y]$. Then for any value $\delta > 0$,
$$\Pr(Y > (1 + \delta) \mu) \leq \exp \left( - \frac{ \delta^2 \mu }{2 + \delta }\right)$$
and for any value $\delta \in (0,1)$,
$$\Pr(Y < (1 - \delta) \mu) \leq \exp \left( - \frac{1}{2} \delta^2 \mu \right).$$
\end{theorem}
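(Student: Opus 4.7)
The plan is to derive Theorem \ref{thmSimpleChernoff} directly from Theorem \ref{thmChernoff} by replacing the exact tail expressions with cleaner analytic upper bounds. Since each bound in Theorem \ref{thmChernoff} can be written as $\exp(\mu \cdot h(\delta))$ for an explicit function $h$ of $\delta$ alone, the task reduces to two real-variable inequalities in $\delta$.

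For the upper tail, Theorem \ref{thmChernoff} gives
$$\Pr(Y > (1+\delta)\mu) \leq \exp\bigl(\mu \cdot [\delta - (1+\delta)\log(1+\delta)]\bigr).$$
I would establish the claim by showing that $\delta - (1+\delta)\log(1+\delta) \leq -\delta^2/(2+\delta)$ for all $\delta > 0$, which after rearrangement is equivalent to $\log(1+\delta) \geq 2\delta/(2+\delta)$. The natural route is to set $f(\delta) := (2+\delta)\log(1+\delta) - 2\delta$, note $f(0)=0$, and check that $f'(\delta) = \log(1+\delta) - \delta/(1+\delta) \geq 0$ on $[0,\infty)$; the latter is itself a standard elementary inequality (for instance, it follows from $\log(1+x) \geq x/(1+x)$, which is in turn a one-line convexity check).

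For the lower tail, Theorem \ref{thmChernoff} gives
$$\Pr(Y < (1-\delta)\mu) \leq \exp\bigl(\mu \cdot [-\delta - (1-\delta)\log(1-\delta)]\bigr),$$
so the goal becomes $(1-\delta)\log(1-\delta) + \delta - \delta^2/2 \geq 0$ on $(0,1)$. Here the cleanest route is a power-series argument: expanding $\log(1-\delta) = -\sum_{k\geq 1} \delta^k/k$ and multiplying through yields $(1-\delta)\log(1-\delta) = -\delta + \sum_{k \geq 2} \delta^k/(k(k-1))$, so the quantity in question equals $\sum_{k \geq 3} \delta^k/(k(k-1))$, which is manifestly nonnegative on $(0,1)$.

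There is no real obstacle here, since the result is entirely standard (the excerpt already cites \cite{Mitzenmacher}). The only mild subtlety is that the denominators $2+\delta$ and $2$ are tight enough that a naive substitution like $\log(1+x) \geq x - x^2/2$ is slightly too weak; one has to use the marginally sharper inequalities described above. Once these are in hand, both bounds drop out immediately upon exponentiating by $\mu$.
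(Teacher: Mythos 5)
The paper does not give a proof of Theorem \ref{thmSimpleChernoff}; it simply cites Chapter~4 of \cite{Mitzenmacher} for both Chernoff-type statements, so there is no paper proof to compare against. Your derivation is correct and is essentially the standard textbook argument one finds in that reference: reduce both bounds to one-variable inequalities in $\delta$ by taking logarithms of the expressions in Theorem \ref{thmChernoff}, then verify the resulting inequalities. For the upper tail your reduction to $\log(1+\delta) \geq 2\delta/(2+\delta)$ is exactly right, and the derivative check $f'(\delta) = \log(1+\delta) - \delta/(1+\delta) \geq 0$ (with $f(0)=0$) closes it. For the lower tail the power-series computation $(1-\delta)\log(1-\delta) + \delta - \delta^2/2 = \sum_{k\geq 3}\delta^k/(k(k-1)) \geq 0$ is clean and correct. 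One small remark worth flagging (about the statement rather than your proof): the exponential-moment Chernoff bound of Theorem \ref{thmChernoff} genuinely requires full independence of the summands, not merely pairwise independence as written, so the hypothesis propagates unchanged to this theorem; your derivation inherits whatever independence assumption Theorem \ref{thmChernoff} actually needs, which is fine since you are treating it as a black box.
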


Mitzenmacher \cite{Mitzenmacher} points out that for the first statement of each theorem, it is enough to let $\mu \leq \E[Y]$, and for the second statement of each theorem, it is enough to let $\mu \geq \E[Y]$.

\subsection{Bounding the relevant values of $s$}
\label{bddS}

Theorems \ref{thmUB} and \ref{thmEasyLB} show that the rainbow connectivity threshold of $\mathcal G$ is of the form $(1+o(1)) \frac{\log n}{\log \log n}$. Intuitively, this should imply that the most difficult values of $s$ for which to prove these theorems are those close to $\frac{\log n}{\log \log n}$ and that other values of $s$ need not be carefully considered. In the following results, we will formalize this intuition.

First, by Theorem \ref{thmUB}, we may assume that $s \geq \frac{\log n}{2 \log \log n}$.
Next, we show that it is sufficient just to prove Theorem \ref{thmEasyLB} for values $s \leq \frac{3 \log n}{\log \log n}$. The proof of this lemma gives some intuition regarding why $\mathcal G$ is more likely to be rainbow connected for larger values of $s$.

\begin{lemma}
\label{lemmaSmall}
Suppose that whenever
 $$\frac{\log n}{\log c - 1 + \log \log n} + \frac{3}{2} + \frac{2 \sqrt{ \log \log \log n }}{ {\log \log n}} \leq s \leq \frac{3 \log n}{\log \log n},$$
$\mathcal G$ is a.a.s.~rainbow connected. Then $\mathcal G$ is a.a.s.~rainbow connected also for any value $s > \frac{3 \log n}{\log \log n}$.
\end{lemma}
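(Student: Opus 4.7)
Let $s > \tfrac{3\log n}{\log\log n}$ be given, and let $\mathcal G = \{G_1,\ldots,G_s\}$ be the family to consider. I plan to reduce to the hypothesized range via a merging construction. Pick $s^* = \lfloor \tfrac{3\log n}{\log\log n} \rfloor$, which lies in the hypothesized interval for $n$ large. Partition $\{1,\ldots,s\}$ into $s^*$ blocks $B_1,\ldots,B_{s^*}$ of as equal size as possible, and set $\tilde G_j := \bigcup_{i\in B_j} G_i$. The crucial observation is a \emph{lifting property}: any rainbow path in the merged family $\tilde{\mathcal G} = \{\tilde G_1,\ldots,\tilde G_{s^*}\}$ is automatically a rainbow path in $\mathcal G$, since an edge assigned color $j$ in $\tilde{\mathcal G}$ belongs to some $G_i$ with $i\in B_j$ and the blocks are disjoint, so distinct $\tilde{\mathcal G}$-colors give distinct $\mathcal G$-colors. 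Hence it suffices to prove that $\tilde{\mathcal G}$ is a.a.s.\ rainbow connected.

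The graphs $\tilde G_j$ are mutually independent with $\tilde G_j \sim G(n,q_j)$, where $q_j = 1 - (1-p)^{|B_j|}$ and $p = \tfrac{c\log n}{sn}$. Writing $p^* = \tfrac{c\log n}{s^* n}$ for the hypothesized density, a short Taylor expansion using $|B_j|\approx s/s^*$ yields $q_j = p^*\bigl(1 - O(\tfrac{\log n}{n})\bigr)$, so the merged density agrees with $p^*$ up to a multiplicative factor of $1 - o(1)$. I would then invoke the hypothesis of the lemma with parameter $s^*$: the family of $s^*$ i.i.d.\ copies of $G(n, p^*)$ is a.a.s.\ rainbow connected, and the underlying first- and second-moment estimates depend on the density only through the product $s^* p^* = \tfrac{c\log n}{n}$, which is preserved up to a $1 + o(1)$ factor when $p^*$ is replaced by $q_j$. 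Re-running the same moment argument therefore yields rainbow connectivity of $\tilde{\mathcal G}$, and hence of $\mathcal G$.

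The main technical obstacle is the strict inequality $q_j < p^*$: since rainbow connectivity is monotone \emph{increasing} in the edge set, no stochastic-domination coupling can transport rainbow connectivity from the denser hypothesized family to the sparser merged family — monotonicity runs the wrong way. Bridging this gap requires verifying that the proof of Theorem~\ref{thmEasyLB} is robust under the multiplicative perturbation $q_j/p^* = 1 - O(\tfrac{\log n}{n})$ of the edge probability, which holds because the relevant moment bounds depend continuously on $p$ through the product $sp$ and the $o(1)$ slack in the hypothesis range absorbs such lower-order corrections.
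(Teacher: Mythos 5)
Your core idea --- merge the $s$ graphs into blocks, observe that a rainbow path in the merged family lifts to a rainbow path in $\mathcal G$, and then invoke the hypothesis for the merged family --- is exactly the strategy of the paper's proof. The gap is in the quantitative claim that each merged density satisfies $q_j = p^*\bigl(1 - O(\tfrac{\log n}{n})\bigr)$. With blocks of ``as equal size as possible'', the block sizes are $\lfloor s/s^*\rfloor$ or $\lceil s/s^*\rceil$, and when $s/s^*$ is small this rounding is not a lower-order correction. For instance, if $s^* < s < 2s^*$, the singleton blocks have $q_j = p = \tfrac{s^*}{s}\,p^*$, which can be as small as roughly $\tfrac{1}{2}p^*$, while the two-element blocks have $q_j \approx 2p$; the ``effective constant'' for the sparse colors is then about $c/2$, which drops below $1$ whenever $c<2$. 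So the merged graphs are neither identically distributed nor all stochastically dominating $G\bigl(n,\tfrac{c'\log n}{s^*n}\bigr)$ for any constant $c'>1$, and the hypothesis cannot be applied as a black box. Your fallback --- re-running the proof of Theorem \ref{thmEasyLB} under a ``$1-o(1)$ perturbation'' --- is calibrated to the wrong error size (the perturbation is a constant factor per color, not $1-o(1)$), and in any case it abandons the point of the lemma, which is to deduce the large-$s$ case from the hypothesis without reopening its proof.

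The repair is to make all blocks exactly the same size, discard the remainder, and aim for a number of blocks comfortably inside the hypothesized range rather than at its right endpoint. The paper takes $k$ to be the smallest power of $2$ with $\lfloor s/k\rfloor < \tfrac{3\log n}{\log\log n}$, which forces $\lfloor s/k\rfloor \geq \tfrac{3\log n}{2\log\log n}$; it merges $k$ graphs per block and throws away the at most $k-1$ leftover graphs. Each block then has the identical density $p^{*}=1-(1-p)^k$, and $p^{*}\lfloor s/k\rfloor > \tfrac{c^{*}\log n}{n}$ with $c^{*}=c\bigl(1-\tfrac{2k}{s}\bigr)$. Since $k/s = O\bigl(\tfrac{\log\log n}{\log n}\bigr)=o(1)$, we get $c^{*}>1$ and $\log c^{*} = \log c - o(1)$, so the hypothesis (read with constant $c^{*}$, together with monotonicity of rainbow connectivity in the edge probability) applies because $\lfloor s/k\rfloor \geq \tfrac{3\log n}{2\log\log n}$ clears the threshold $(1+o(1))\tfrac{\log n}{\log\log n}$ with room to spare. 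Restructure your partition along these lines; the lifting observation you already have then finishes the proof.
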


\begin{proof}
Suppose that $s > \frac{3 \log n}{\log \log n}$. Let $k$ be the smallest power of $2$ for which $ \left \lfloor s/k \right \rfloor < \frac{3 \log n}{\log \log n}$. 
By our choice of $k$, we know that 
\begin{equation}
\label{eqnSK}
\frac{s}{k} \geq \frac{3 \log n}{2 \log \log n} .
\end{equation}
We then create a family $\mathcal H$ of $\lfloor s / k \rfloor$ graphs by letting $$H_i = G_{ki+1} \cup G_{ki + 2} \cup \dots \cup G_{k(i+ 1)}$$ for $0 \leq i < \lfloor s / k \rfloor$. We observe that each graph $H_i \in \mathcal H$ is randomly sampled from $G(n,p^*)$, where $p^* = 1-(1-p)^k$. We estimate
$$p^* \left \lfloor \frac{s}{k} \right \rfloor > \frac{s-k}{k} (kp-k^2p^2) > ps \left( 1 - \frac{k}{s} - kp \right) > ps \left( 1 - \frac{2k}{s} \right) = \frac{c^* \log n}{n},$$
where $c^* = c \left( 1 - \frac{2k}{s} \right).$
By our hypothesis, if $\lfloor s / k \rfloor \geq \frac{\log n}{\log c^* - 1 + \log \log n} + \frac{3}{2} + \frac{2 \sqrt{ \log \log \log n }}{ {\log \log n}} $,
 then $\mathcal H$ is a.a.s.~rainbow connected, which implies that $\mathcal G$ is a.a.s.~rainbow connected. 
We also see
$$\log c^* = \log c + \log \left( 1 - \frac{2k}{s} \right) > \log c - \frac{3k}{s}.$$
Therefore, it suffices to show that 
$$\left \lfloor \frac{s}{k} \right \rfloor \geq \frac{\log n}{\log c - \frac{3k}{s} - 1 + \log \log n} + 2 , $$
or stronger still, that
$$\left \lfloor \frac{s}{k} \right \rfloor > (1 + o(1)) \frac{\log n}{\log \log n}$$
holds for every function $o(1)$. However, this follows immediately from (\ref{eqnSK}). 
 Hence, we know that $\mathcal H$ is a.a.s.~rainbow connected, and thus $\mathcal G$ is also a.a.s.~rainbow connected.
\end{proof}

By combining Theorem \ref{thmUB} and Lemma \ref{lemmaSmall} with the hypothesis of Theorem \ref{thmEasyLB}, we assume throughout the rest of the paper that $s$ satisfies
\begin{equation}\tag{$\star$}
\label{sBound}
\frac{\log n}{2 \log \log n} \leq s \leq \frac{3 \log n}{\log \log n}. 
\end{equation}

\subsection{Spheres and breadth-first search for rainbow paths}
\label{BFS}

In this subsection, we will outline a breadth-first search technique that we use extensively in the proof of Theorem \ref{thmEasyLB}.
For an ordinary graph $G$ and a vertex $v \in V(G)$, the \emph{sphere} of radius $t$ around $v$, denoted by $\Gamma_t(v)$, is defined as the set of vertices in $V(G)$ at distance exactly $t$ from $v$. Then, the statement that $\diam(G) > t$ is equivalent to the statement that there exists a vertex pair $u,v \in V(G)$ for which $u \not \in \bigcup_{i = 1}^t \Gamma_i(v)$. For each value $t \geq 0$, $\Gamma_t(v)$ can be calculated by carrying out a breadth-first search on $G$ starting at $v$ and searching up to distance $t$. Therefore, for ordinary graphs, the concepts of a graph's diameter, spheres around vertices, and breadth-first search are intimately related.

In our rainbow setting, we aim to determine the number $s$ of colors needed to make $\mathcal G$ a.a.s.~rainbow connected.
We will see that similarly to a graph's diameter, the value $s$ also has bounds that are closely related to breadth first search and spheres, which brings us to the following definition.

\begin{definition}
Let $v \in X$, $C \subseteq [s]$, and let $t \geq 0$ be an integer. Then we define $\Gamma_t^C(v)$ to be the set of vertices $u \in X$ satisfying the following two conditions:
\begin{itemize}
\item[(i)] $u$ can be reached from $v$ by a rainbow path of length $t$ consisting of edges of graphs $G_i$ for which $i \in C$;
\item[(ii)] $u$ cannot be reached from $v$ by a rainbow path of length at most $t-1$ consisting of edges of graphs $G_i$ for which $i \in C$.
\end{itemize}
\end{definition}
We will refer to these sets $\Gamma_t^C(v)$ as \emph{spheres}.
We observe that for each vertex $v \in X$, $v$ is rainbow connected with every vertex in $\bigcup_{i = 0}^s\Gamma_i^{[s]}(v)$.
In fact, $v$ would be rainbow connected with every vertex in $\bigcup_{i = 0}^s\Gamma_i^{[s]}(v)$ even if we removed the second condition in the definition of each $\Gamma_i^{[s]}(v)$, but we will see later that we need this second condition for technical reasons.

Similarly to spheres in ordinary graphs, for a vertex $v \in X$, the sets $\Gamma_i^{C}(v)$ can be computed recursively with a breadth-first search.
First, we let $\Gamma_0^{C}(v) = \{v\}$. Then, for $0 \leq t \leq |C| - 1$, we can compute $\Gamma_{t+1}^{C}(v)$ from $\Gamma_{t}^{C}(v)$ as follows. 
We consider each vertex $w \in \Gamma_{t}^{C}(v)$ individually. By definition, there exists a nonempty set $\mathcal P_w$ of rainbow paths from $v$ to $w$ of length exactly $t$, each of which using only edge colors in $C$. Using each path $P \in \mathcal P_w$, we define $C(P)$ as the set of $t$ colors appearing at the edges $E(P)$. Then, for each path $P \in \mathcal P_w$, we search for vertices $u \in X \setminus \bigcup_{i = 0}^t \Gamma^C_i$ for which $wu \in E(G_j)$ for some color $j \in C \setminus C(P)$. Whenever we find such a vertex $u$, we add $u$ to $\Gamma_{t+1}^{C}(v)$. By carrying out this process for each vertex $w \in \Gamma_t^{C}(v)$ and each path $P \in \mathcal P_w$, we determine $\Gamma_{t+1}^{C}(v)$.

When we calculate bounds for $s$, we will be interested in estimating the sizes of the spheres $\Gamma_t^C(v)$. We have several results that will help us.
The first one shows that each individual graph $G_i \in \mathcal G$ almost surely has a small maximum degree.

\begin{lemma}
\label{lemmaDelta}
It holds a.a.s.~that for each graph $G_i \in \mathcal G$, $\Delta(G_i) < \frac{2c \log n}{\log \log n}.$
\end{lemma}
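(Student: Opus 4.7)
The plan is to bound the upper tail of $\deg_{G_i}(v)$ for each pair $(v,i)$ via a Chernoff bound and then take a union bound over all $ns$ pairs. Because $\deg_{G_i}(v)$ is a sum of $n-1$ independent edge-indicators, it is binomial with mean $\mu := (n-1)p \leq \frac{c \log n}{s}$. Invoking the lower bound on $s$ in \eqref{sBound}, we get $\mu \leq 2c \log \log n$, whereas the target threshold $T := \frac{2c \log n}{\log \log n}$ lies a factor of at least $\frac{\log n}{(\log \log n)^2}$ above $\mu$. Writing $T = (1+\delta)\mu$, the deviation parameter $\delta$ is therefore enormous, which is exactly the regime where the multiplicative Chernoff bound is strong.

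Next I apply the upper-tail inequality in Theorem~\ref{thmChernoff}. After taking logarithms and substituting $T = (1+\delta)\mu$, it reads
$$\Pr\bigl(\deg_{G_i}(v) \geq T\bigr) \leq \exp\bigl(T - \mu - T \log(T/\mu)\bigr).$$
Since $\log(T/\mu) \geq \log \log n - 2 \log\log\log n = (1-o(1)) \log \log n$, while $T = O(\log n / \log \log n)$ and $\mu = O(\log \log n)$ are of lower order, the term $T \log(T/\mu)$ dominates the exponent and yields
$$\Pr\bigl(\deg_{G_i}(v) \geq T\bigr) \leq \exp\bigl(-(2c - o(1)) \log n\bigr) = n^{-2c + o(1)}.$$
A union bound over the $n$ choices of $v$ and the $s = O(\log n / \log \log n)$ graphs (using the upper bound in \eqref{sBound}) gives total failure probability at most $ns \cdot n^{-2c + o(1)} = n^{1 - 2c + o(1)} (\log n)^{O(1)}$, which is $o(1)$ because $c > 1$.

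No step here is conceptually difficult; the lemma is essentially a textbook large-deviation estimate. The only observation worth highlighting is that because $T/\mu$ is polylogarithmic rather than bounded, the $T \log(T/\mu)$ term in the Chernoff exponent already produces a factor of $(2c - o(1))\log n$, which dominates every other contribution and leaves comfortable room to absorb the $ns$ factor from the union bound. The constraint \eqref{sBound} is used both ways: the lower bound keeps $\mu$ small enough that $T/\mu$ is polylogarithmic, and the upper bound keeps $s$ small enough to be swallowed by the union bound.
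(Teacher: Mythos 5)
Your proposal is correct and follows essentially the same route as the paper: bound the mean by $2c\log\log n$ using the lower bound on $s$ from \eqref{sBound}, apply the multiplicative Chernoff upper-tail bound with a polylogarithmically large deviation parameter so that the $T\log(T/\mu)$ term produces a factor $(2c-o(1))\log n$ in the exponent, and finish with a union bound over all $ns$ vertex--graph pairs. No meaningful differences.
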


\begin{proof}
Let $v \in X$, and let $G_i \in \mathcal G$. The expected value $\mu$ of $\deg_{G_i}(v)$ is equal to 
$$\mu = p(n-1) < \frac{c \log n}{s} \leq 2c \log \log n =: \nu,$$
using the bound (\ref{sBound}).
Hence, by applying a Chernoff bound (Theorem \ref{thmChernoff} with the remark stated after the theorem),
$$\Pr \left( \deg_{G_i}(v) > \frac{2 c \log n}{\log \log n} \right) < \left( \frac{e^\delta}{(1 + \delta)^{1 + \delta}}\right)^{\nu},$$
where $\delta = \frac{ \log n}{(\log \log n)^2}$.
Calculating further,
\begin{eqnarray*}
 \left( \frac{e^\delta}{(1 + \delta)^{1 + \delta}}\right)^{\nu} &=& \exp( \nu \delta - \nu (1 + \delta) \log (1 + \delta) )\\
 &<& \exp(\nu \delta (1 - \log \delta)) \\
 &=& \exp \left( \frac{2c \log n}{\log \log n} (- \log \log n +2 \log \log \log n + 1) \right)\\
 & =& o \left(\frac{1}{n^2} \right).
\end{eqnarray*}
Therefore, it a.a.s.~holds that for each vertex $v \in X$, and for each graph $G_i \in \mathcal G$, $\deg_{G_i}(v) < \frac{2c \log n}{\log \log n}$.
\end{proof}

Next, we show that for each vertex $v \in X$, there are logarithmically many edges of $\mathcal G$ incident to $v$, even when a set of $d+1$ colors is removed, where $d$ is defined in (\ref{eq:def_d}).

\begin{lemma}
\label{lemFirstStep}
There exists a value $\epsilon = \epsilon(c) > 0$ such that a.a.s., for every vertex $v \in X$ and every set $C \subseteq [s]$ of size at most $d+1$,
$$\left |\Gamma^{[s] \setminus C}_1(v) \right | \geq \epsilon \log n.$$
\end{lemma}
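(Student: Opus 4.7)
The plan is to bound the probability that $|\Gamma_1^{[s]\setminus C}(v)|<\epsilon \log n$ for a fixed pair $(v,C)$ and then take a union bound. For each vertex $u\neq v$, let $I_u$ be the indicator that $uv\in E(G_i)$ for some $i\in[s]\setminus C$. Different $I_u$'s depend on disjoint sets of potential edges, so they are mutually independent Bernoullis, and $Y:=|\Gamma_1^{[s]\setminus C}(v)|=\sum_{u\neq v}I_u$. Each $I_u$ has the same success probability $q:=1-(1-p)^{s-|C|}$.

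First I would estimate $\mu=\E[Y]$. Since $|C|\le d+1$ with $d$ defined in (\ref{eq:def_d}), and since $s\ge \frac{\log n}{2\log\log n}$ by (\ref{sBound}), we have $|C|/s=o(1)$. Thus $(s-|C|)p=(1-o(1))sp=(1-o(1))\frac{c\log n}{n}=o(1)$, and the inequality $1-(1-p)^x>px-p^2x^2$ yields $q=(1-o(1))\frac{c\log n}{n}$, whence $\mu=(n-1)q=(1-o(1))c\log n$.

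The heart of the argument is the Chernoff lower tail of Theorem \ref{thmChernoff}. Writing $\delta=1-r$ for some small constant $r\in(0,1)$ to be fixed later, the Chernoff bound gives
\[
\Pr(Y<r\mu)\le\exp\bigl(\mu\,g(r)\bigr),\qquad g(r):=r-1-r\log r.
\]
Since $g$ is continuous on $(0,1]$ with $\lim_{r\to 0^+}g(r)=-1$, and since $c>1$, I may fix $r=r(c)$ so small that $g(r)<-\tfrac{1+c}{2c}$. Using $\mu\ge(1-o(1))c\log n$ and setting $\epsilon:=rc/2$ (so that $\epsilon\log n\le r\mu$ for all sufficiently large $n$), I obtain
\[
\Pr\bigl(Y<\epsilon\log n\bigr)\le n^{-(1+c)/2+o(1)}.
\]

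Finally, the number of pairs $(v,C)$ with $|C|\le d+1$ is at most $n\cdot s^{d+1}$. Since $s\le\frac{3\log n}{\log\log n}$ and $d+1=O(\log\log\log\log n)$, we have $s^{d+1}=\exp\bigl(O(\log\log\log\log n\cdot\log\log n)\bigr)=n^{o(1)}$, so the union bound gives total failure probability at most $n^{1+o(1)}\cdot n^{-(1+c)/2+o(1)}=n^{(1-c)/2+o(1)}=o(1)$, as $c>1$. The only delicate point is ensuring the Chernoff exponent beats the $n^{1+o(1)}$ coming from the union bound; this is exactly where the hypothesis $c>1$ is used, via the limit $g(r)\to-1$ as $r\to 0^+$.
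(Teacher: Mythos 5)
Your proof is correct, but it handles the removal of the color set $C$ differently from the paper. Both arguments rest on the same core estimate --- the expected size of the first sphere is $(1-o(1))c\log n$, and the Chernoff lower tail of Theorem \ref{thmChernoff} with a small retention fraction $r$ drives the failure exponent toward $-c<-1$, which is exactly where $c>1$ enters. The difference is in the quantification over $C$: you apply the Chernoff bound directly to $\bigl|\Gamma_1^{[s]\setminus C}(v)\bigr|$ for each of the $n^{1+o(1)}$ pairs $(v,C)$ and absorb the extra $s^{d+1}=n^{o(1)}$ factor into the union bound, choosing $r$ so that the per-pair failure probability is $n^{-(1+c)/2+o(1)}$ with $(1+c)/2>1$. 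The paper instead proves the concentration only for the full sphere $\Gamma_1^{[s]}(v)$ (union bound over $v$ alone, needing only failure $o(1/n)$ per vertex) and then disposes of $C$ deterministically: by Lemma \ref{lemmaDelta} each color contributes at most $\frac{2c\log n}{\log\log n}$ neighbors of $v$, so deleting $|C|\le d+1$ colors removes only $o(\log n)$ vertices from the sphere. Your route is self-contained in that it never invokes Lemma \ref{lemmaDelta}, at the price of a larger union bound and a slightly more delicate choice of the constant $r$ (the exponent must be strictly below $-1$, which you secure via $g(r)\to-1$ and $c>1$); the paper's route needs only the weaker per-vertex tail but leans on the maximum-degree lemma, which it needs elsewhere anyway. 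Your independence claim for the indicators $I_u$ is sound (they depend on disjoint edge slots), and the counting of sets $C$ is off only by the harmless factor $d+2$ from summing over sizes $0,\dots,d+1$, which does not affect the $n^{o(1)}$ conclusion.
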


\begin{proof}
Let $v \in X$. We first estimate $\left |\Gamma^{[s]}_1(v) \right |$. The probability that $v$ is adjacent to a given vertex $u \in X \setminus \{v\}$ in at least one graph $G_i \in \mathcal G$ is equal to $1 - (1-p)^{s} > ps - p^2s^2.$ Therefore, the expected number of vertices in $\Gamma^{[s]}_1(v)$ is at least 
$$\nu = (n-1)(ps - p^2s^2) > \alpha \log n,$$
for some constant $\alpha > 1$ depending only on $c$. Therefore, by a Chernoff bound (Theorem \ref{thmChernoff}), for a constant $\beta > 0$, 
$$\Pr \left( \left |\Gamma^{[s]}_1(v) \right | < \beta \nu \right) < \left( \frac{e^{-1 + \beta}}{\beta ^{\beta}} \right)^{\nu} \leq \exp \left( (-1 + \beta - \beta \log \beta ) \alpha \log n \right).$$
When $\beta$ is a sufficiently small constant, $(-1+\beta - \beta \log \beta) \alpha$ is bounded below and away from $-1$, so
$$\Pr \left( \left |\Gamma^{[s]}_1(v) \right | < \beta \nu \right) = o \left( \frac{1}{n} \right).$$
Therefore, a.a.s., for every vertex $v \in X$, 
$$ \left |\Gamma^{[s]}_1(v) \right | \geq \beta \nu > \beta \log n.$$
Finally, by Lemma \ref{lemmaDelta}, the degree of each graph $G_i$ at $v$ is at most $\frac{2 c \log n}{\log \log n}$. Therefore, a.a.s, for each vertex $v \in X$,
$$ \left |\Gamma^{[s]\setminus C}_1(v) \right | > \beta \log n - |C| \left( \frac{2 c \log n}{\log \log n} \right) > \epsilon \log n,$$
for a sufficiently small positive constant $\epsilon>0$ depending only on $c$.
\end{proof}

We will use the value $\epsilon$ from Lemma \ref{lemFirstStep} throughout the rest of the paper.

With our final lemma of this section, we will show that when a breadth first search is carried out to compute a sphere $\Gamma^{[s] \setminus C} _k(v)$ for some positive integer $k$, the sets $\Gamma^{[s] \setminus C}_t(v)$ $(0 \leq t \leq k)$ at least double in size at each step of the search until a certain number of vertices are reached by the search. With this lemma, we can ensure that during all but the late steps of a breadth first search, at least half of the vertices reached by the search belong to the outer sphere. 

\begin{lemma}
\label{lemmaDouble}
It holds a.a.s.~that for every vertex $v \in X$, every set $C \subseteq [s]$ of size at most $ d+1$, and every $t$ with $0 \leq t \leq s - |C| - 1$ for which $\max_{1 \leq i \leq t} \left |\Gamma^{[s] \setminus C}_{i}(v) \right | \leq \frac{n}{10}$, we have 
$$\left |\Gamma^{[s] \setminus C}_{t+1}(v) \right | \geq 2 \left |\Gamma^{[s] \setminus C}_{t}(v) \right |.$$
\end{lemma}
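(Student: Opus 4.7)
The plan is to fix a triple $(v, C, t)$ with $|C| \le d+1$ and $t \le s-|C|-1$, write $S_i := \Gamma_i^{[s]\setminus C}(v)$ and $B_t := \bigcup_{i=0}^{t} S_i$, and show that $\Pr(|S_{t+1}| < 2|S_t|) = n^{-\omega(1)}$, from which the lemma follows by a union bound. The approach is to execute the breadth-first search defining $S_0,\ldots,S_t$, condition on its outcome, and then bound $|S_{t+1}|$ from below by exposing fresh edges from $S_t$. Concretely, for each $w \in S_t$ I would pick a single rainbow $v$--$w$ path $P_w$ of length $t$ using colors in $[s]\setminus C$, and set $D_w := C \cup C(P_w)$, a set of size $|C|+t$; the $s-|C|-t \ge 1$ colors in $[s]\setminus D_w$ are each available for extending $P_w$ by one step.

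For each $u \in X \setminus B_t$, let $Y_u$ be the indicator that $wu \in E(G_j)$ for some pair $(w,j) \in S_t \times ([s]\setminus D_w)$. The crucial observation is that the BFS through step $t$ only exposes edges incident to $B_{t-1}$, so all edges underlying the $Y_u$ are fresh $\mathrm{Bernoulli}(p)$; and since distinct $u$'s involve disjoint edge sets, the $Y_u$ are mutually independent, each succeeding with probability $q := 1 - (1-p)^N$, where $N = |S_t|(s-|C|-t)$. Moreover $Y_u=1$ forces $u \in S_{t+1}$, because the path $P_w$ extended by the edge $wu$ in color $j \in [s]\setminus D_w$ is rainbow and uses only colors in $[s]\setminus C$. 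Hence $|S_{t+1}| \ge Y := \sum_{u \in X\setminus B_t} Y_u$.

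For the expectation, I would argue inductively that doubling held at every earlier step, so $|B_t| \le 2|S_t| \le n/5$ and $\mu := \E[Y] \ge (4n/5)\, q$. Split into two cases. If $Np \ge 1$ then $q \ge 1 - 1/e \ge 1/2$, giving $\mu \ge 2n/5 \ge 4|S_t|$. Otherwise $q \ge Np/2$, which together with $p = \frac{c \log n}{sn}$, $s-|C|-t \ge 1$, and the bound $s = O(\log n/\log\log n)$ from \eqref{sBound} yields $\mu \ge \Omega(\log\log n)\cdot |S_t|$. In either case $\mu/|S_t| = \omega(1)$, and Theorem \ref{thmSimpleChernoff} then delivers $\Pr(Y < 2|S_t|) \le \exp(-\Omega(|S_t|\log\log n))$.

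The induction is anchored by Lemma \ref{lemFirstStep}, which guarantees $|S_1| \ge \epsilon \log n$ a.a.s.\ for every $(v, C)$; combined with doubling through step $t-1$ this forces $|S_t| \ge \epsilon \log n$ whenever $t \ge 1$. Hence the per-triple failure probability is $\exp(-\Omega(\log n \cdot \log\log n)) = n^{-\omega(1)}$, which comfortably survives a union bound over the at most $n \cdot s \cdot s^{d+1} = n^{1+o(1)}$ choices of $(v, C, t)$, using $d = O(\log\log\log\log n)$ and $s = O(\log n)$. The main obstacle is keeping the conditioning honest: one must commit to a single path $P_w$ per $w \in S_t$ (rather than aggregating over all rainbow paths to $w$), which is what makes the "fresh edges" argument clean, at the modest cost of a slightly weaker lower bound on $q$ that is nevertheless enough to drive the doubling.
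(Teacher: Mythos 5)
Your proposal is correct and follows essentially the same route as the paper: run the breadth-first search up to the frontier $\Gamma^{[s]\setminus C}_t(v)$, observe that the edges from the frontier to unused vertices are unexposed and independent across target vertices, lower-bound the expected size of the next sphere, and finish with a Chernoff lower tail and a union bound over $(v,C,t)$. The only substantive difference is bookkeeping: you count the sets $C$ as at most $s^{d+1}=n^{o(1)}$ rather than the paper's cruder $n^{d+1}$, so a per-step failure probability of $\exp\left(-\Omega(\log n\cdot\log\log n)\right)$ suffices and you can skip the paper's special $t=1$ step, which first boosts the frontier to order $(\log n)^2$ so that the subsequent failure probabilities beat the larger union bound.
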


\begin{proof}
We fix $v \in X$ and $C \subseteq [s]$. For every $t\ge0$ we let $V_t := \Gamma^{[s] \setminus C}_t(v)$ and $n_t := |V_t|$. 
As we have $n$ choices for $v$, fewer than $n^{d+1}$ choices for $C$, and fewer than $\log n$ steps $t$, it suffices to show that the inequality holds at each step with probability at least $1 - o(\exp(-(d+2) \log n - \log \log n))$. In fact, we will see that at each step, the probability of failure is only $\exp(-\gamma (\log n)^2)$, for some positive constant $\gamma> 0$.

The proof is by induction on $t$. When $t = 0$, the statement follows from Lemma \ref{lemFirstStep}. When $t = 1$, we show that a stronger bound holds. For a vertex $w \in V_1$ and a vertex $u \in X \setminus (V_0 \cup V_1)$, the probability that there exists an edge $uw$ in a graph $G_i$ for which some graph $G_j \neq G_i$ satisfies $vw \in E(G_j)$ is at least 
$$
   1 - (1-p)^{s- |C| - 1 } > \frac{1}{2}\, ps = \frac{c \log n}{2n}.
$$
Therefore, for each vertex $u \in X \setminus (V_0 \cup V_1)$, 
$$\Pr(u \in V_2) > 1 - \left( 1 - \frac{c \log n}{2n} \right)^{n_1}.$$
By Lemmas \ref{lemmaDelta} and \ref{lemFirstStep} and ($\star$), 
$$\epsilon \log n \leq n_1 \leq \frac{2c \log n}{\log \log n} \cdot s \leq 6c \left( \frac{\log n}{\log \log n} \right)^2.$$
Therefore, we may estimate that 
\begin{eqnarray*}
1 - \left( 1 - \frac{c \log n}{2n} \right)^{n_1} &>& n_1 \left( \frac{c \log n}{2n} \right) - n_1^2 \left( \frac{c \log n}{2n} \right)^2 \\
 & = & n_1 \cdot \frac{c \log n}{2n}\, (1 - o(1)) \\
 & > & n_1 \cdot \frac{c \log n}{3n} . 
\end{eqnarray*}
Hence, the expected number of vertices in $V_2$ is at least
$$ n_1 \cdot \frac{c \log n}{3n} \left(n - 6c \left( \frac{\log n}{\log \log n} \right)^2 -1 \right) > \frac{1}{4} \epsilon (\log n)^2.$$
Thus, by a Chernoff bound (Theorem \ref{thmSimpleChernoff}),
$$\Pr \left(n_2 < \frac{1}{8} \epsilon (\log n)^2 \right) < \exp \left( - \frac{1}{32} \epsilon (\log n)^2 \right) .$$
Hence, we may assume that $n_2 \geq \frac{1}{8} \epsilon (\log n)^2$, which is larger than $2n_1$~a.a.s.

Suppose now that $t \geq 2$ and $n_t\le\frac{n}{10}$. By the induction hypothesis, $n_0+n_1+\cdots +n_t \le n_t(1+\frac{1}{2}+\frac{1}{4}+\cdots+\frac{1}{2^t}) < \frac{n}{5}$. As $t \leq s - |C| - 1$, for each vertex $w \in V_t$, there exists at least one color $i \in [s] \setminus C$ such that our breadth first search may extend from $w$ using $G_i$. Using a similar argument as before, we have:
\begin{eqnarray}
\label{eqnGamma}
 \E[n_{t+1}] \geq \frac{4}{5}n \left(1-(1-p)^{n_t} \right) > \frac{4}{5}n \left(1-\exp (-n_t p) \right).
\end{eqnarray}
We can express the right-hand expression in (\ref{eqnGamma}) in the form $ n_t g(n, n_t)$, where  
$$g(n, n_t) = \frac{4n}{5 n_t} \left(1-\exp (-n_t p) \right).$$
Furthermore, by logarithmic differentiation,
\begin{eqnarray} 
\label{loggydif}
\frac{n_t}{g} \frac{\partial g}{\partial n_t} = -1 + \frac{n_t p \exp(-n_t p)}{1 - \exp(-n_t p)} .
\end{eqnarray}
Since $x < e^x - 1$ for $x > 0$, it follows that 
$$\frac{x e ^{-x}}{1 - e^{-x}} < 1$$
for $x > 0$. Therefore, $\frac{\partial g}{\partial n_t} < 0$, and $g(n,n_t)$ is minimized by increasing $n_t$. Thus, since $n_t \leq \frac{n}{10}$, 
$$\E \left [ n_{t+1} \right ] \geq n_t g \left(n,\frac{n}{10} \right) = 8 \left(1 - \exp \left(-\frac{np}{10} \right) \right)\cdot n_t > 4 n_t.$$
Hence, the expected value of $n_{t+1}$ is at least $4 n_t$. Then, by a Chernoff bound (Theorem \ref{thmSimpleChernoff}),
$$\Pr \left( n_{t+1} < 2 n_t \right) < \exp \left( - \frac{1}{2} n_t \right) < \exp \left(-\frac{1}{16} \epsilon (\log n)^2 \right) .$$
Therefore, the conclusion of the lemma a.a.s.~holds for all vertices $v \in X$ and all subsets $C \subseteq [s]$ of size at most $d+1$.
\end{proof}

\section{Proof of Theorem \ref{thmEasyLB}}

In this section, we will prove Theorem \ref{thmEasyLB}. Our strategy is to show that for an arbitrary vertex pair $u,v \in X$, $u$ and $v$ are rainbow connected with probability $1 - o \left( \frac{1}{n^2} \right)$, from which it will follow that $\mathcal G$ is a.a.s.~rainbow connected. 
Recall that by the bound (\ref{sBound}), we assume that $s \leq \frac{3 \log n }{\log \log n}$.
We will need the following fact.

\begin{lemma}
\label{lemmaLiftOff}
It a.a.s.~holds that for every vertex $u \in X$, there exists a value $t^* \leq d$
for which
\begin{equation}
\label{eqnLiftOff}
\left |\Gamma_{t^*}^{[s-1]}(u) \right | \geq \frac{1}{2}\epsilon (c \log n)^d.
\end{equation}
\end{lemma}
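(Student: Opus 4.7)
The strategy is to run a breadth-first search from $u$ in $\mathcal G$ using only the $s-1$ colors in $[s-1]$, and to show that the sphere sizes multiply by a factor close to $c\log n$ at each step. Write $V_t := \Gamma_t^{[s-1]}(u)$ and $n_t := |V_t|$; we must exhibit, a.a.s.~uniformly in $u\in X$, some $t^*\le d$ with $n_{t^*}\ge\tfrac12\epsilon(c\log n)^d$. The base case is supplied directly by Lemma \ref{lemFirstStep} applied with $C=\{s\}$ (of size $1\le d+1$): a.a.s., $n_1\ge\epsilon\log n$ for every $u$.

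For the inductive step, fix $t$ with $1\le t<d$ and assume $n_t<\tfrac12\epsilon(c\log n)^d$ (otherwise we are already done with $t^*=t$); in particular $n_t\ll n$. For each $w\in V_t$ pick a single rainbow path $P_w$ from $u$ to $w$ of length $t$ within colors $[s-1]$, using the color set $C(P_w)\subset[s-1]$ of size $t$. A candidate vertex $v\in X\setminus\bigcup_{i\le t}V_i$ enters $V_{t+1}$ whenever some edge $vw$ with $w\in V_t$ appears in a graph $G_i$ with $i\in[s-1]\setminus C(P_w)$. The resulting $(w,i)$-indexed trials are mutually independent Bernoulli$(p)$'s, so, conditioning on $V_{\le t}$ and the paths $\{P_w\}$,
\[\Pr(v\in V_{t+1})\ge 1-(1-p)^{n_t(s-1-t)}\ge(1-o(1))\,\frac{cn_t\log n}{n},\]
using $(s-1-t)/s=1-o(1)$ (from $t\le d$ and (\ref{sBound})) and $n_t(s-1-t)p=o(1)$ under the assumed upper bound on $n_t$. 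Summing over candidate $v$'s yields $\E[n_{t+1}\mid V_{\le t},\{P_w\}]\ge(1-o(1))cn_t\log n$.

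The key point is that the indicators $\mathbf 1_{v\in V_{t+1}}$ for distinct $v$ depend on disjoint edge sets (the edges between $v$ and $V_t$), hence are conditionally independent. Theorem \ref{thmSimpleChernoff} with a slowly vanishing error parameter $\delta$ therefore yields $n_{t+1}\ge(1-\delta)c(\log n)n_t$ with failure probability $\exp(-\Omega(\delta^2 c(\log n)n_t))$. Choosing $\delta$ to vanish slowly (for instance $\delta=(\log\log n)^{-1}$) and using $n_t\ge\epsilon\log n$, the single-step failure probability is $\exp(-\omega(\log n))$, which survives a union bound over all $n$ starting vertices and all $d$ BFS steps. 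Iterating the factor-$c\log n$ growth across $d-1$ steps from $n_1\ge\epsilon\log n$ produces $n_d\ge(1-o(1))\epsilon(\log n)(c\log n)^{d-1}$, which, after absorbing constants, matches the target $\tfrac12\epsilon(c\log n)^d$ and yields a valid $t^*\le d$.

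The main obstacle is the calibration of $\delta$. On one hand the cumulative multiplicative loss $(1-\delta)^{d-1}$ must stay close to $1$ so as not to consume the factor $\tfrac12$ in the target, which is possible because $d$ grows only as $\log\log\log\log n$; on the other hand the single-step tail $\exp(-\Omega(\delta^2 c(\log n)n_t))$ must remain strong enough, after conditioning on the entire BFS history including the chosen rainbow paths $\{P_w\}$, to survive a union bound over the $n$ starting vertices and the $d$ BFS steps. A secondary subtlety is that the ``available'' color set at each vertex $w\in V_t$ is determined by $P_w$, which must be selected measurably with respect to the BFS history so that the Bernoulli trials generating $V_{t+1}$ are genuinely independent of $V_{\le t}$.
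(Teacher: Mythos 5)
Your proposal is correct and follows essentially the same route as the paper's proof: a breadth-first search from $u$ restricted to colors $[s-1]$, a base case from Lemma \ref{lemFirstStep}, per-step Chernoff lower bounds giving multiplicative growth by a factor $(1-o(1))\,c\log n$ (exactly the paper's $L_t$ recursion), and a union bound over the $n$ starting vertices and $d$ steps, with your uniform $\delta=(\log\log n)^{-1}$ playing the role of the paper's step-dependent $\delta_t$. The only looseness is that iterating from $n_1\ge\epsilon\log n$ yields $(1-o(1))\frac{\epsilon}{c}(c\log n)^{d}$ rather than $\frac{1}{2}\epsilon(c\log n)^{d}$ once $c\ge 2$, but this constant-factor slack appears in the paper's own estimate of $L_d$ as well and is immaterial to how the lemma is used.
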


\begin{proof}
We consider a vertex $u \in X$.
To prove (\ref{eqnLiftOff}), we will perform a breadth first search as described in Section \ref{BFS}. As before, we will use the term \emph{unused vertices} to refer to those vertices that have not yet been reached by our search. In our breadth first search, if a set $\Gamma_t^{[s-1]} $ ever contains $(c \log n)^{d}$ vertices, then (\ref{eqnLiftOff}) is proven, along with the lemma. Otherwise, we may assume that at each step of our breadth first search, we have at least $n - d(c \log n)^{d}$ unused vertices.
For $1 \leq t \leq d-1$, we define the value
\begin{equation}
\label{eqn:delta1}
\delta_t = \left\{
  \begin{array}{ll}
    (\log n)^{-1/3}, & t=1,2; \\[0.5mm]
    ({\log n})^{-1}, & 3 \le t\le d-1.
  \end{array}
\right.
\end{equation}
Then, for $0 \leq t \leq d$, we define values $L_0 = 1$, $L_1 = \epsilon \log n$, and set
$$L_{t+1} = L_t \left( pn \right) ( s-t - 1) (1 - ps) \left(1 - \frac{d(c \log n)^{d}}{n} \right) (1 - \delta_t)$$
for $1 \leq t \leq d - 1$.
For $t \geq 1$, this gives us the following closed-form expression:
\begin{equation}
\label{eqnLLB}
L_t = (\epsilon \log n) \left( pn \right)^{t-1} (s-2)^{\underline{t-1}}(1 - ps)^{t-1} \left(1 - \frac{d(c \log n)^{d}}{n} \right)^{t-1} \prod_{i = 1}^{t-1}(1 - \delta_i).
\end{equation}
Using the inequality
$$(s-2)^{\underline{d-1}} > s^{d-1} \left(1 - \frac{d}{s} \right)^{d-1} > s^{d-1} \exp \left( - \frac{2d^2}{s} \right),$$
we can estimate
\begin{eqnarray*}
L_d &=& (\epsilon \log n) \left( pn \right)^{d-1} (s-2)^{\underline{d-1}}(1 - ps)^{d-1} \left(1 - \frac{d(c \log n)^{d}}{n} \right)^{d-1} \prod_{t = 1}^{d-1}(1 - \delta_t) \\
    &>& \epsilon (c \log n)^d \exp \left( - \frac{2d^2}{s} \right) (1 - ps)^{d-1} \left(1 - \frac{d(c \log n)^{d}}{n} \right)^{d-1} \prod_{t = 1}^{d-1}(1 - \delta_t)\\
    &>& \frac{1}{2} \epsilon (c \log n)^d. 
\end{eqnarray*}
Therefore, if we prove that
\begin{equation}
\label{eqnLBLBLB}
\left | \Gamma_{t}^{[s-1]}(u) \right | \geq L_t
\end{equation}
for $0\le t\le d$, then (\ref{eqnLiftOff}) will follow.

We will prove (\ref{eqnLBLBLB}) by induction on $t$.
We see that $\left | \Gamma_0^{[s-1]}(u) \right | = L_0 = 1$ by definition, and $\left | \Gamma_1^{[s-1]}(u) \right | \geq L_1 = \epsilon \log n$ by Lemma \ref{lemFirstStep}. Assuming that (\ref{eqnLBLBLB}) holds for values up to $t$, we estimate $\left | \Gamma_{t+1}^{[s-1]}(u) \right | $, for $t =1,\dots,d-1$. After $\Gamma_t^{[s-1]}(u)$ is computed, then for a vertex $u \in \Gamma_t^{[s-1]}(u)$ and an unused vertex $w \in X \setminus \bigcup_{j = 0}^t \Gamma_t^{[s-1]}(u)$, the probability that there exists an edge $vw$ in a graph $G_i$ for which $i$ does not appear on a given rainbow path from $u$ to $v$ is equal to $1 - (1-p)^{s-t-1} > (s-t-1)p -( s-t-1)^2 p^2$. Hence, if $\Gamma_t^{[s-1]}(u)$ has at least $L_t$ vertices, then
\begin{eqnarray}
\notag
\E \left [ \left | \Gamma_{t+1}^{[s-1]}(u) \right | \right ] \,>\, \mu_{t+1} &:=& L_t \left(( s-t-1)p - ( s-t-1)^2p^2 \right) (n - d (c \log n)^{d})\\
\notag
 & > & L_t \cdot np \cdot {(s - t-1)(1 - ps)} \left(1 - \frac{ d(c \log n)^{d} }{n} \right) \\
\label{eqnMuL}
 & = & L_{t+1} \, (1-\delta_t)^{-1}. 
\end{eqnarray}
For $t \in \{ 1,2\}$, we have the rough bound of $\mu_{t+1} > (\log n)^{7/4}$ by the induction hypothesis and (\ref{eqnLLB}), (\ref{eqnLBLBLB}), and (\ref{eqnMuL}). Hence, by a Chernoff bound (Theorem \ref{thmSimpleChernoff}), 
\begin{eqnarray*}
\Pr \left( \left | \Gamma_{t+1}^{[s-1]} (u) \right | < L_{t+1} \right) 
 &\le& \Pr \left(\left | \Gamma_{t+1}^{[s-1]} (u) \right | < \mu_{t+1}(1 - \delta_t) \right ) \\
 &\leq & \exp \left(-\frac{1}{2}\delta_t^2 \mu_{t+1} \right) \\
 &<& \exp \left(-\frac{1}{2} (\log n)^{13/12} \right)\\
 &=& o \left( \frac{1}{dn} \right).
\end{eqnarray*}
When $t \geq 3$, we have the rough bound of $\mu_{t+1} > (\log n)^{7/2}$ by the induction hypothesis and (\ref{eqnLLB}), (\ref{eqnLBLBLB}), and (\ref{eqnMuL}). Hence, by a Chernoff bound (Theorem \ref{thmSimpleChernoff}), 
\begin{eqnarray*}
\Pr \left(\left | \Gamma_{t+1}^{[s-1]}(u) \right | < L_{t+1} \right) & \le & \Pr \left(\left | \Gamma_{t+1}^{[s-1]} (u) \right | < \mu_{t+1}(1 - \delta_t) \right)\\
 & \leq& \exp\left(-\frac{1}{2}\delta_t^2 \mu_{t+1} \right) \\
 &< & \exp \left(-\frac{1}{2} (\log n)^{3/2} \right) \\
 & =& o \left( \frac{1}{d n} \right). 
\end{eqnarray*}
Therefore, with probability $1 - o \left(\frac{1}{n} \right)$, (\ref{eqnLBLBLB}) holds for all values $0 \leq t \leq d$, and the proof is complete.
\end{proof}

Now, we move to the main strategy. We consider a pair of distinct vertices $u,v \in X$. 
For each rainbow path $P$ with an endpoint at $u$ and with length at most $d$, we let $C(P)$ be the set of colors used in $E(P)$. We consider only rainbow paths with $C(P)\subseteq [s-1]$, and we define 
$$R_P := [s-1] \setminus C(P).$$ 
We also define
$$r := s - d - 1 \leq |R_P|.$$
We will consider spheres centered at $v$ obtained from a breadth first search using edges with colors in $R_P$. We write 
$$\xi = \frac{ n }{ (\log n)^{d-1}}$$
and we claim that $|\Gamma_t^{R_P}(v)| \geq \xi$ for some value $t \leq r$. We show this in the following lemma.

\begin{lemma}
\label{lemmaVSpheres}
Let $u,v \in X$ be a pair of distinct vertices, and let $P$ be a rainbow path of length at most $d$ with an endpoint at $u$. With probability $1 - o \left ( \frac{1}{n^4} \right )$, there exists a value $t\in[r]$ for which either $\Gamma_t^{R_P}(v)$ intersects $P$ or $\left |\Gamma_t^{R_P}(v) \right | \geq \xi$.
\end{lemma}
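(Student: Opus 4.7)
The plan is to extend the breadth-first search of Lemma \ref{lemmaLiftOff} from $v$ using only colors in $R_P$, pushing the induction much further so that the sphere either meets $P$ or grows past $\xi$. Writing $V_t := \Gamma_t^{R_P}(v)$ and $n_t := |V_t|$, I would argue inductively that, so long as each prior $V_i$ has missed $V(P)$ and has had $n_i < \xi$, we have $n_t \ge L_t$ for explicit targets $L_t$ to be defined. If at some step the sphere first hits $P$ we are done; otherwise we continue until $L_{t^*} \ge \xi$ at some $t^*\le r$. The base case $n_1 \ge \epsilon \log n$ is exactly Lemma \ref{lemFirstStep}, applicable because $[s]\setminus R_P = \{s\}\cup C(P)$ has size at most $d+1$.

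For the inductive step, I would condition on $V_0, V_1, \dots, V_t$ and, for each $u \in V_t$, fix a rainbow $v$-to-$u$ path of length $t$ using a color set $C_u \subseteq R_P$ with $|C_u| = t$. By the BFS revealing order, the edges from $V_t$ to $X\setminus\bigcup_{i\le t}V_i$ have not yet been examined, so for any such unused $w$, the events ``$uw \in E(G_i)$'' indexed by $(u,i)\in V_t\times(R_P\setminus C_u)$ are mutually independent, giving
\[
\Pr(w\in V_{t+1}\mid V_0,\dots,V_t)\ \ge\ 1-(1-p)^{(|R_P|-t)n_t} \ \ge\ (|R_P|-t)\,n_t\,p\,(1-o(1)),
\]
where the final step uses $n_t<\xi$ and (\ref{sBound}) to force $(|R_P|-t)n_tp=o(1)$. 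Multiplying by the $n(1-o(1))$ unused vertices and setting $L_1=\epsilon\log n$ together with, in analogy with (\ref{eqnLLB}),
\[
L_{t+1}=L_t\cdot(np)(|R_P|-t)(1-p|R_P|)\bigl(1-\tfrac{(t+1)\xi}{n}\bigr)(1-\delta_t),
\]
with $\delta_t$ of the same two-stage shape as (\ref{eqn:delta1}), a Chernoff bound (Theorem \ref{thmSimpleChernoff}) shows that the single-step probability of failing $n_{t+1}\ge L_{t+1}$ is $\exp(-\Omega((\log n)^{13/12}))$; a union bound over the $r = O(\log n / \log\log n)$ steps yields total failure probability $o(1/n^4)$.

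The main obstacle I expect is verifying that $L_{t^*}\ge\xi$ is achieved for some $t^*\le r$. Because $np(|R_P|-t)\approx c\log n$ throughout the induction, the cumulative product grows roughly as $(c\log n)^{t-1}$, and one needs $t \ge \log n/(\log c + \log\log n) - d + O(1)$ to force $L_t \ge \xi = n/(\log n)^{d-1}$. Comparing this with $r = s - d - 1$, the inequality holds precisely because the hypothesis of Theorem \ref{thmEasyLB} pushes $s$ a constant of about $\tfrac{3}{2}$ above $\log n/(\log c-1+\log\log n)$; this slack corresponds to the $\log n/(\log\log n)^2$ gap between the rainbow connectivity threshold and the diameter noted in the introduction, and carrying out this quantitative comparison, with careful bookkeeping of the falling factorial produced by the factors $|R_P|-t$ (as in the use of $(s-2)^{\underline{d-1}}$ in Lemma \ref{lemmaLiftOff}), is the main calculation.
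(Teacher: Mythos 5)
Your proposal follows essentially the same route as the paper: a breadth-first search from $v$ restricted to the colors in $R_P$, an inductive family of lower bounds $L_t$ on the sphere sizes certified step-by-step by Chernoff bounds with the same two-stage choice of $\delta_t$, the same treatment of the ``intersects $P$'' alternative, and the same per-step failure probability that is superpolynomially small so that a union bound over the $r$ steps gives $o(1/n^4)$. The probabilistic part of your induction is sound and matches the paper's Claim \ref{claimLLLL} almost verbatim.

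The one place you should be careful is the step you yourself flag as the main calculation, because the heuristic you give for it is quantitatively wrong. You write that the cumulative product grows roughly as $(c\log n)^{t-1}$ and that $t\ge \log n/(\log c+\log\log n)-d+O(1)$ suffices to reach $\xi$. If that were right, the lemma would hold for $s$ near the diameter of $\bigcup_i G_i$, contradicting Theorem \ref{thmUB}. The falling factorial $(r-1)^{\underline{t-1}}$ is not a lower-order correction: by Stirling, at $t=r$ it contributes a factor of roughly $r^{r-1}e^{-r}$, so the effective base of the geometric growth is $c\log n/e$, not $c\log n$, and the number of steps needed is $\log n/(\log c - 1 + \log\log n)+O(1)$. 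In other words, the $\log n/(\log\log n)^2$ gap you invoke as ``slack'' is consumed entirely by the $e^{-r}$ penalty and is not available; the comparison with $r=s-d-1$ is tight up to an additive constant, and the only genuine slack is the $\frac{3}{2}+\frac{2\sqrt{\log\log\log n}}{\log\log n}$ excess in the hypothesis of Theorem \ref{thmEasyLB}, which is exactly what is needed to absorb the residual $-\frac{3}{2}\log\log n + O(d)$ error terms (the $O(d)$ coming from replacing $r/s$ by $1$ and from the $\prod(1-\delta_i)$ and $\alpha_i$ corrections). Carrying out this final comparison carefully, as the paper does, is where the lemma is actually decided, so you cannot leave it at the level of the estimate you wrote down.
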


\begin{proof}
For $0 \leq t \leq r$, we write $V_t = \Gamma^{R_P}_t (v) $.
Similarly to Lemma \ref{lemmaLiftOff}, we will define values $L_t$ with the goal of showing that if $\left | V_t \right | < \xi$ for every $t \leq r$, then with high probability, $\left |V_t \right | \geq L_t$ for all values $0 \leq t \leq r$, which will ultimately give us a contradiction. In order to estimate $\left |V_t \right | $ for $0 \leq t \leq r$, we will carry out a breadth first search from $v$ as defined in Section \ref{BFS}. 
We will assume that this breadth first search never reaches a vertex of $P$, since otherwise, the lemma would be proven. 

For $1 \leq t \leq r-1$, we define $\delta_t$ as in (\ref{eqn:delta1}). 
Next, we define $\phi = \frac{1 }{\log n}$ and $\alpha_t = 1 - (r-t)p$. Finally, we define $L_0 = 1$, $L_1 = \frac{1}{2} \epsilon \log n$, and for $t \geq 2$, 
\begin{equation}
\label{eqnLLLB}
L_t = \frac{1}{2} \epsilon \log n  \left( \frac{c \log n}{s} \right)^{t-1} (r-1)^{\underline{t-1}} \, (1 - 2 \phi)^{2t-2}\, \prod_{i = 1}^{t-1} \alpha_i(1 - \delta_i).
\end{equation}
If there exists a value $k \leq r$ for which 
$\left | \bigcup_{t = 0}^k V_t \right |\geq \phi n$,
then there must exist a value $t \leq r$ for which $\left | V_t \right | \geq \frac{\phi n}{r} > \xi$. Therefore, we assume that we always have at least $n-\phi n$ unused vertices. 

\begin{claim}
\label{claimLLLL}
With probability $1 - o \left( \frac{1}{n^4} \right)$, for all values $0 \leq t \leq r$, $|V_t| \geq L_t$.
\end{claim}

\begin{proof}
We prove the claim by induction on $t$.
We let $\epsilon$ be the value from Lemma \ref{lemFirstStep}.
Clearly, $|\Gamma_0| = L_0 = 1$, and $|\Gamma_1| \geq L_1 = \epsilon \log n$ by Lemma \ref{lemFirstStep}. 

Now, for each value $t \geq 1$, we assume that $V_t $ is already computed, and we seek a lower bound for $V_{t+1}$. For notational simplicity, we write $m = |V_t |$.
 For an unused vertex $x \in X \setminus \bigcup_{j = 0}^t V_j$ and a vertex $y \in V_t$, the probability that $x$ is reached from $y$ in our breadth first search is at least 
$$1 - (1-p)^{r-t} > (r-t)p - (r-t)^2 p^2 = (r-t)p (1 - (r-t)p) = (r-t)p\alpha_t.$$
Therefore, using the assumption that $m < \xi$, the probability that $x$ is reached from at least one vertex of $V_t $ is at least 
\begin{eqnarray*}
1- (1 - (r-t)p\alpha_t)^{m} &>& (r-t) p \alpha_t m - (r-t)^2 p^2 \alpha_t^2 m^2 \\
 & \geq & (r-t)p \alpha_t m ( 1 - sp \xi) \\
 & = & (r-t)p \alpha_t m \left( 1 - \frac{ c \log n}{n} \cdot \frac{ n }{ (\log n)^{d-1}} \right)\\
 &>& (r-t)p \alpha_t m ( 1 - 2 \phi).
\end{eqnarray*}
Hence, 
\begin{eqnarray*}
\mu_{t+1}:= \E \left [ |V_{t+1}| \right ] & > & n(1 -  \phi) \left( 1- (1 - (r-t)p\alpha_t)^{m} \right) \\
 & > & n( 1 - 2 \phi)^2 p \alpha_t (r - t) m \\
 &\geq& L_t \cdot n( 1 - 2 \phi) ^2 p \alpha_t (r - t)  \\
 & = &L_t  \cdot \frac{c \log n}{s} \cdot  (1 - 2 \phi)^2 \alpha_t ({r - t}) .
\end{eqnarray*}
When $t \in \{1,2\}$, the equation (\ref{eqnLLLB}) and the induction hypothesis give us the rough bound $\mu_{t+1} > (\log n)^{11/6}$. Therefore, by a Chernoff bound (Theorem \ref{thmSimpleChernoff}),
\begin{eqnarray*}
\Pr \left( |V_{t+1}| < L_{t+1} \right)
 &=& \Pr \left( |V_{t+1}|  < \mu_{t+1} (1 - \delta_t) \right) \\
 &\leq & \exp \left(-\frac{1}{2} \delta_t^2 \mu_t \right)\\
 & < & \exp \left(-\frac{1}{2} (\log n)^{7/6} \right) \\
 & = & o\left( \frac{1}{n^5} \right).
\end{eqnarray*}
 When $t \geq 3$, the equation (\ref{eqnLLLB}) and the induction hypothesis give us the rough bound $\mu_t > (\log n)^{7/2}$. Therefore, 
\begin{eqnarray*}
\Pr \left( |V_{t+1}| < L_{t+1} \right) 
 &=& \Pr \left( |V_{t+1}| < \mu_t(1 - \delta_t) \right) \\
 & \leq& \exp \left( -\frac{1}{2} \delta_t^2 \mu_t \right) \\
 & <& \exp \left( -\frac{1}{2}(\log n)^{3/2} \right ) \\
 & =& o \left( \frac{1}{n^5} \right). 
\end{eqnarray*}
Putting these bounds together, the probability that $|V_t| < L_t$ for some $t$ is at most 
$o \left( \frac{1}{n^5} \right) + r\, o \left( \frac{1}{n^5} \right) = o \left( \frac{1}{n^4} \right).$  This completes the proof.
\end{proof}

Now, we apply Claim \ref{claimLLLL} with $t = r$ and obtain the following bound:
$$
|V_r| \geq \frac{1}{2} \epsilon \log n ( pn )^{r-1} (r-1)! (1 - \phi)^{2r-2} \prod_{i = 1}^{r-1} \alpha_i (1 - \delta_i) .
$$
By Stirling's approximation, $(r-1)! = \frac{1}{r} \cdot r! \geq r^{r-1} e^{-r} \sqrt{2 \pi r}$. Therefore,
\begin{eqnarray*}
 |V_r| 
 &\geq& \frac{1}{2} \epsilon \log n \left( pnr \right)^{r-1} e^{-r} \sqrt{2\pi r} (1 - \phi)^{2r-2} \prod_{i = 1}^{r-1} \alpha_i (1 - \delta_i) \\
 &\geq& 
 \frac{1}{2} \epsilon \log n ( c \log n)^{r-1} (r/s)^{r-1} e^{-r} \sqrt{2 \pi r} (1 - \phi)^{2r-2} \prod_{i = 1}^{r-1} \alpha_i (1 - \delta_i) \\
 &=& \exp \bigg (r \log \log n + (r-1) \left(\log c + \log\left( \frac{r}{s} \right) + 2\log (1- \phi)\right) - r + \frac{1}{2} \log r + O(1) \bigg ).
\end{eqnarray*}
We recall that $\phi = \frac{1}{\log n}$,
and then using the inequality $-\log(1-x) < 2x$ for small $x$, it follows that
$$\left | (r-1)\log(1-\phi) \right | < 2 (r-1) \phi = O(1).$$
Furthermore, 
$$\left | (r-1) \log \left( \frac{r}{s} \right) \right| = \left | (r-1) \log \left(1 - \frac{d+1}{s} \right) \right | < 2 (r-1)\cdot \frac{d+1}{s} = O(d).$$
Hence, we may write more simply:
\begin{eqnarray*}
 |V_{r}| 
 &\geq& \exp \big (r \log \log n + (r-1) \log c - r + \frac{1}{2} \log r + O(d) \big ) \\
 &\geq& \exp \left( (s -d - \tfrac{1}{2} ) \log \log n + s (\log c - 1) + O(d) \right).
\end{eqnarray*}
Then,
\begin{equation}
 \frac {|V_r|}{n / (\log n)^{d-1}} 
 \geq \exp \big( s(\log \log n + \log c - 1) - \tfrac{3}{2} \log \log n - \log n + O(d) \big). \label{eq:end1}
\end{equation} 

By the hypothesis of Theorem \ref{thmEasyLB} we have that
\begin{equation*}
s \geq \frac{\log n}{\log c - 1 + \log \log n} + \frac{3}{2} + \frac{2 \sqrt{ \log \log \log n }}{ {\log \log n}}.
\end{equation*}
This assumption and (\ref{eq:end1}) imply the following:
\begin{eqnarray*}
 \frac {|V_r|}{n / (\log n)^{d-1}} 
 &\geq& \exp \bigg( \frac{3}{2}(\log c - 1) + \frac{2\sqrt{\log\log\log n}}{\log\log n} (\log\log n + \log c - 1) + O(d) \bigg) \\
 &\geq& \exp \big( \sqrt{\log\log\log n} + O(d) \big) \\
 & > & 1.
\end{eqnarray*}
Thus, we conclude that
$$|V_r| \geq \frac{ n }{ (\log n)^{d-1}} = \xi.$$
Therefore, the assumption that $|V_t| < \xi$ for every value $0 \leq t \leq r$ is contradicted with probability $1 - o \left ( \frac{1}{n^4} \right )$, and thus the lemma is proven.
\end{proof}

Now, using Lemmas \ref{lemmaLiftOff} and \ref{lemmaVSpheres}, we are ready to prove Theorem \ref{thmEasyLB}. We choose a pair of distinct vertices $u,v \in X$. If we are able to show that there exists a rainbow path with endpoints $u$ and $v$ with probability $1 - o \left ( \frac{1}{n^2} \right )$, then this will show that $\mathcal G$ is a.a.s.~rainbow connected, and Theorem \ref{thmEasyLB} will be proven. By Lemma \ref{lemmaVSpheres}, for each rainbow path $P$ with an endpoint at $u$ and with length at most $d$, it holds with probability $1 - o \left ( \frac{1}{n^4} \right )$ that there exists a value $t_P$ for which $\Gamma_{t_P}^{R_P}(v)$ intersects $P$ or contains at least $\xi$ vertices. Using Lemma \ref{lemmaDelta}, the number of rainbow paths with an endpoint at $u$ and of length at most $d$ is bounded above by $ds^d \left ( \frac{2c \log n}{\log \log n} \right )^d < n$. Therefore, with probability $1 - o\left ( \frac{1}{n^3} \right )$, it holds for every rainbow path $P$ with an endpoint at $u$ and of length at most $d$ that there exists a value $t_P$ for which $\Gamma_{t_P}^{R_P}(v)$ intersects $P$ or contains at least $\xi$ vertices. If one of the rainbow paths $P$ is intersected by $\Gamma_{t_P}^{R_P}(v)$, then clearly $u$ and $v$ are connected by a rainbow path; therefore, we may assume that for every $P$, we can find a sphere $\Gamma_{t_P}^{R_P}$ with at least $\xi$ vertices.

Now, we define a set $E_{uv}$ of vertex pairs as follows. If there exists a rainbow path $P$ of length at most $d$ with endpoints $u$ and $w$, and if $x \in \Gamma_{t_P}^{R_P}(v)$, then we add the pair $\{w,x\}$ to $E_{uv}$. We estimate the number of vertex pairs in $E_{uv}$. By Lemma \ref{lemmaLiftOff}, there exists a set $\mathcal P$ of at least $\frac{1}{2} \epsilon (c \log n)^d$ rainbow paths of length at most $d$, each with $u$ as an endpoint, and all with distinct second endpoints $w$ in some sphere $\Gamma_{t^*}^{[s-1]}(u) $. Furthermore, for each path $P \in \mathcal P$, all of the vertices $x \in \Gamma_{t_P}^{R_P}(v)$ are distinct. Therefore, each pair $\{w,x\}$ with $w \in \Gamma_{t^*}^{[s-1]}(u) $ can be added at most twice to $E_{uv}$, once with $w$ as an endpoint of a path $P \in \mathcal P$ and with $x \in \Gamma_{t_P}^{R_P}(v)$, and once with $x$ as an endpoint of a path $P \in \mathcal P$ and with $w \in \Gamma_{t_P}^{R_P}(v)$. By Lemma \ref{lemmaVSpheres}, for each path $P \in \mathcal P$, at least $\xi$ pairs are added to $E_{uv}$, giving $E_{uv}$ a total of at least 
\[ \frac{1}{4} \epsilon (c \log n)^d \xi > 4 \log n / p\]
distinct pairs. 
With probability at least 
$$1-(1 - p)^{4 \log n / p} = 1 - o \left(\frac{1}{n^2} \right),$$
$G_s$ has an edge at some pair in $E_{uv}$, giving us a rainbow path between $u$ and $v$ with probability $1 - o \left ( \frac{1}{n^2} \right )$. Since the number of vertex pairs $u,v \in X$ is less than $n^2$, it follows that every vertex pair in $X$ is connected by a rainbow path with probability $1 - o(1)$. This completes the proof of Theorem \ref{thmEasyLB}.

\section{An alternative colorful random setting}
\label{sec:differentSetting}

The results in the previous sections all belong to the setting in which a randomly edge-colored graph is obtained by taking the union of many monochromatically edge-colored graphs on a common vertex set. As mentioned in the introduction, a different random model is obtained by taking a single random graph $G$, each of whose edges is given a single color from the set $[s]$ uniformly at random. These two settings are not equivalent; for example, in the first setting, a single vertex-pair may have edges of multiple colors, while in the second setting, this is not possible. Nevertheless, these two models are similar in many ways, and in fact, Theorems \ref{thmUB} and \ref{thmEasyLB} have direct counterparts in this new random setting.

\begin{theorem}
\label{thm:newUB}
Let $G$ be a graph taken randomly from $G(n,p)$, with $p = \frac{c \log n}{n}$, where $c>1$ is a constant. Let each edge of $G$ be given a color from $[s]$ uniformly at random.
If $$s \leq \frac{\log n}{\log c - 1 + \log \log n} - \frac{1}{2} + \frac{\log \log \log n}{3 \log \log n},$$
then a.a.s.~$G$ is not rainbow connected. 
\end{theorem}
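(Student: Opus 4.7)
The plan is to follow the first-moment argument of Theorem \ref{thmUB} almost verbatim, after observing that the expected number of rainbow $u$-$v$ paths of each given length agrees in the two random models. Fix a vertex pair $u,v\in X$ and let $Y$ denote the number of rainbow paths from $u$ to $v$ in the edge-colored $G$.

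For each $t$ with $1\leq t\leq s$, the number of $u$-$v$ paths of length $t$ in the complete graph on $X$ is at most $n^{t-1}$. For any fixed such path $P$, the probability that all $t$ edges of $P$ lie in $G\in G(n,p)$ with $p=\tfrac{c\log n}{n}$ equals $p^t$, and, conditional on their presence, each edge independently receives a uniformly random color in $[s]$, so the probability that $P$ is rainbow equals $s^{\underline{t}}/s^t\le s!/s^t$. By linearity of expectation, the expected number of rainbow $u$-$v$ paths of length $t$ is at most
\[
n^{t-1}\,p^t\,\frac{s!}{s^t} \;=\; \frac{(c\log n)^t}{n}\cdot\frac{s!}{s^t}.
\]
This is precisely the bound used in the proof of Theorem \ref{thmUB}: there, the factor $s^{-t}$ came from the smaller edge probability $p=\tfrac{c\log n}{sn}$ combined with the overcount $n^{t-1}s!\,p^t$, whereas here it arises from the requirement that the $t$ independently sampled colors be pairwise distinct. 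Either way the assembled expression is identical.

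Because the per-length first-moment bound coincides with the one in Theorem \ref{thmUB}, the remaining steps transfer without modification: Stirling's approximation, the parametrization $s=\tfrac{\log n}{\log c-1+\log\log n}+k$ with the same upper bound on $k$, and the geometric-type sum over $t$ yield $\E[Y]\to 0$, and Markov's inequality then implies that an arbitrarily chosen pair $u,v$ is a.a.s.\ not rainbow connected. There is no new technical obstacle in this alternative colorful random model; the only point worth verifying is the coincidence of the per-path first moments in the two settings, which is what allows the proof of Theorem \ref{thmUB} to be reused essentially unchanged.
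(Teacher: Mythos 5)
Your proposal is correct and is exactly the route the paper takes: the paper's Section \ref{sec:differentSetting} simply states that the First Moment Method calculation from Theorem \ref{thmUB} can be copied verbatim, and your computation $n^{t-1}p^t\,s^{\underline{t}}/s^t \le \frac{(c\log n)^t}{n}\cdot\frac{s!}{s^t}$ is precisely the verification that the per-length expectations coincide in the two models. Nothing further is needed.
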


\begin{theorem}
\label{thm:newLB}
Let $G$ be a graph taken randomly from $G(n,p)$, with $p = \frac{c \log n}{n}$, where $c>1$ is a constant. Let each edge of $G$ be given a color from $[s]$ uniformly at random.
If 
$$s \geq \frac{\log n}{\log c - 1 + \log \log n} + \frac{3}{2} + \frac{2 \sqrt{ \log \log \log n }}{ {\log \log n}} ,$$
then a.a.s.~$G$ is rainbow connected. 
\end{theorem}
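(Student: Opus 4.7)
The plan is to adapt the proof of Theorem \ref{thmEasyLB} almost line by line, exploiting the fact that in the alternative model the probability that a fixed edge $e$ receives any specified color $i \in [s]$ equals $q := p/s = \frac{c \log n}{sn}$, the same as the per-color edge probability in the original setting, and that the colorings of different edges are independent. The one structural difference---that within a single edge the color events are mutually exclusive rather than independent---affects our estimates only through lower-order terms.

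First I would verify that the preparatory lemmas (Lemmas \ref{lemmaDelta}, \ref{lemFirstStep}, \ref{lemmaDouble}, \ref{lemmaLiftOff}, and \ref{lemmaVSpheres}) carry over without substantive change. The color-$i$ degree of any vertex remains $\mathrm{Bin}(n-1, q)$, and the probability that a given edge is ``usable''---present in $G$ with a color in a prescribed subset $S\subseteq[s]$---is now exactly $|S|q$, as opposed to $1 - (1-q)^{|S|}$ in the original model; these match to leading order and the alternative model's value is actually slightly larger. Since each BFS expansion step considers pairwise distinct edges, the per-edge color exclusivity never enters, and every recursive size bound $L_t$, expected-sphere estimate $\mu_t$, and Chernoff tail bound in the original proof transfers unchanged (or with a trivial improvement).

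The one genuine obstacle is the final bridging step, which in the original proof exploits the independence of $G_s$ from $G_1,\dots,G_{s-1}$. In the alternative model the event ``$\{w,x\}$ has color $s$'' is exclusive with ``$\{w,x\}$ has color in $[s-1]$'', so the BFS on colors $[s-1]$ implicitly conditions on examined edges. I would handle this via a two-pass exposure: for each edge $e$ sample $J_e \sim \mathrm{Ber}(p)$ and $M_e \sim \mathrm{Ber}(1/s)$ independently, so that the indicator of ``$e$ has color $s$'' is $J_e M_e$, independent across edges; only when $J_e = 1$ and $M_e = 0$ is a uniform color in $[s-1]$ then sampled. The BFS never directly examines $M_e$, so conditional on the full BFS history each pair $\{w,x\} \in E_{uv}$ satisfies $\Pr[\text{color } s] \geq q$ unless the BFS has already revealed a color in $[s-1]$ on $\{w,x\}$, in which case we call the pair ``bad''. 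A direct count using Lemma \ref{lemmaDelta} and the sphere-size bounds shows that the total number of edges examined by the BFS from $u$ and from $v$ is $O(n/(\log n)^{d-3})$, and that the expected fraction of these carrying a color in $[s-1]$ is $O(p)$, so the number of bad pairs in $E_{uv}$ is $o(|E_{uv}|)$ a.a.s. Hence the failure probability of the bridging step remains $(1-q)^{(1-o(1))|E_{uv}|} = o(1/n^2)$, and a union bound over the $\binom{n}{2}$ vertex pairs completes the proof.
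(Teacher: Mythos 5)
The paper only sketches Theorem~\ref{thm:newLB}, deferring almost everything to Theorem~\ref{thmEasyLB} and asserting that the preparatory lemmas and the final ``find a color-$s$ edge between the two BFS frontiers'' step port over ``without any modifications.'' Your proposal follows exactly this strategy, and your observation that the per-edge, per-color probability $q=p/s=\frac{c\log n}{sn}$ matches the original per-color edge probability, and that the probability of having a color in a prescribed $S\subseteq[s]$ becomes exactly $|S|q$ (slightly better than the original $1-(1-q)^{|S|}$), is precisely the calculation the authors highlight in Section~\ref{sec:differentSetting}. So the bulk of your argument is the paper's argument.

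Where you go beyond the paper's sketch is the bridging step. You correctly identify that the independence of $G_s$ from $G_1,\dots,G_{s-1}$---which is what makes the estimate $1-(1-p)^{|E_{uv}|}$ immediate in the original setting---is lost in the uniformly-colored model: the $[s-1]$-BFS does learn something about which pairs cannot carry color $s$, since a ``yes'' answer to ``does this pair have a color in $S\subseteq[s-1]$?'' rules out color $s$. The paper glosses over this by saying the step works ``without any modifications.'' Your two-pass exposure ($J_e$ for edge presence, $M_e$ for color $s$ vs.\ $[s-1]$, and a uniform $[s-1]$-color only when $J_e=1,M_e=0$) is a sound way to make the conditioning precise: conditional on the BFS's yes/no answers, a pair on which every query returned ``no'' has $\Pr[\text{color }s]\geq q$ (the conditioning can only help), and the only genuinely lost pairs are those on which the BFS \emph{discovered} an $[s-1]$-colored edge. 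Showing these ``bad'' pairs are $o(|E_{uv}|)$ is the right target.

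Two small cautions on your bookkeeping of the bad pairs. First, ``edges examined'' is ambiguous: the BFS inspects on the order of $n\cdot(\text{total sphere size})$ vertex \emph{pairs}, but only the pairs that return ``yes'' (i.e., actual $[s-1]$-colored edges of $G$ incident to a sphere) become bad, and these number on the order of $(\text{total sphere size})\cdot\Delta(G)$. The phrase ``the expected fraction of these carrying a color in $[s-1]$ is $O(p)$'' only makes sense for the former count, so you should be explicit about which count you are using; with either accounting one gets $o(|E_{uv}|)$, since $|E_{uv}|\gtrsim \epsilon(c\log n)^d\xi = \Theta(n\log n)$. Second, note that $E_{uv}$ is itself BFS-dependent, so a clean bound should cap bad pairs by the deterministic quantity ``$[s-1]$-colored edges of $G$ with at least one endpoint in the union of all spheres explored,'' and then bound \emph{that}. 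With these clarifications your proposal is a faithful---and, at the bridging step, more careful---rendering of the paper's intended proof.
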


We sketch some of the details that one might use to prove these theorems in the new setting. In order to prove Theorem \ref{thm:newUB}, the First Moment Method calculation used in the proof of Theorem \ref{thmUB} can be copied exactly. In order to prove Theorem \ref{thm:newLB}, the techniques and lemmas used to prove Theorem \ref{thmEasyLB} can be followed very closely. First, the inequality (\ref{sBound}) can be obtained using the same method as in Lemma \ref{lemmaSmall}. Then, the same breadth first search method can be employed, and we again can use Chernoff bounds in order to estimate the sizes of spheres. In fact, when we compute lower bounds for the sizes of spheres, we will often obtain better estimates. The reason for this is that in the original setting, we estimate the probability that a vertex pair contains an edge belonging to a color set $C$ as 
\[1-(1-p)^{|C|} > p|C| - p^2 |C|^2 = (1- o(1)) \frac{c |C| \log n}{sn}.\]
However, in the new setting, we can write this probability exactly as 
\[\frac{p|C|}{s} = \frac{c  |C|\log n}{sn},\] which gives us approximately the same probability, but with no error term. Therefore, the lower bounds for sphere sizes that we use in the original setting can be translated into the new setting without any changes. Finally, our last technique of finding an edge of color $s$ between the endpoints of two rainbow paths can be used without any modifications. Therefore, our overall proof structure can be used in both settings.

\raggedright
\bibliographystyle{abbrv}
\bibliography{RCBib}
\end{document}